	\font\tenex=cmex10 
	\newdimen\p@renwd
	\def\bmat#1{\begingroup \m@th
	   \setbox\z@\vbox{\def\cr{\crcr\noalign{\kern2\p@\global\let\cr\endline}}%
	     \ialign{$##$\hfil\kern2\p@\kern\p@renwd&\thinspace\hfil$##$\hfil
	       &&\quad\hfil$##$\hfil\crcr
	       \omit\strut\hfil\crcr\noalign{\kern-\baselineskip}%
	       #1\crcr\omit\strut\cr}}%
	   \setbox\tw@\vbox{\unvcopy\z@\global\setbox\@ne\lastbox}%
	   \setbox\tw@\hbox{\unhbox\@ne\unskip\global\setbox\@ne\lastbox}%
	   \setbox\tw@\hbox{$\kern\wd\@ne\kern-\p@renwd\left[\kern-\wd\@ne
	     \global\setbox\@ne\vbox{\box\@ne\kern2\p@}%
	     \vcenter{\kern-\ht\@ne\unvbox\z@\kern-\baselineskip}\,\right]$}%
	   \null\;\vbox{\kern\ht\@ne\box\tw@}\endgroup}
	\newcommand{\R}{\mathbb {R}}
	\newcommand{\tridiag}{\ensuremath{\,\mathrm{tridiag}}}
	\newtheorem{thm}{Theorem}
	\newtheorem{cor}[thm]{Corollary}
	\newtheorem{lemma}[thm]{Lemma}
	\newtheorem{remark}[thm]{Remark}
	\newtheorem{defn}[thm]{Definition}
	\DeclareMathOperator*{\argmax}{arg\,max}
	\title{On maximum volume submatrices and cross approximation for symmetric semidefinite and diagonally dominant matrices}
	\author{Alice Cortinovis\footnote{MATH-ANCHP, École Polytechnique Fédérale de Lausanne, Station 8, 1015 Lausanne, Switzerland. E-mail: alice.cortinovis@epfl.ch. The work of Alice Cortinovis has been supported by the SNSF research project \emph{Fast algorithms from low-rank updates}, grant number: 200020\_178806.} \and Daniel Kressner\footnote{MATH-ANCHP, École Polytechnique Fédérale de Lausanne, Station 8, 1015 Lausanne, Switzerland. E-mail: daniel.kressner@epfl.ch} \and Stefano Massei\footnote{MATH-ANCHP, École Polytechnique Fédérale de Lausanne, Station 8, 1015 Lausanne, Switzerland. E-mail: stefano.massei@epfl.ch} }
	\date{}
\begin{document}
	
	\maketitle
	
	\begin{abstract}
	    The problem of finding a $k \times k$ submatrix of maximum volume of a matrix $A$ is of interest in a variety of applications. For example, it yields a quasi-best low-rank approximation constructed from the rows and columns of $A$. We show that such a submatrix can always be chosen to be a principal submatrix if $A$ is symmetric semidefinite or diagonally dominant. Then we analyze the low-rank approximation error returned by a greedy method for volume maximization, cross approximation with complete pivoting. Our bound for general matrices extends an existing result for symmetric semidefinite matrices and yields new error estimates for diagonally dominant matrices. In particular, for doubly diagonally dominant matrices the error is shown to remain within a modest factor of the best approximation error. We also illustrate how the application of our results to cross approximation for functions leads to new and better convergence results.
	\end{abstract}

	\section{Introduction}
	
	Given a matrix $A \in \R^{n\times n}$ and $1\le k\le n$, the volume of a submatrix $A(I,J)$ for two index sets $I,J\subset \{1,\ldots,n\}$ of cardinality $k$ is defined as the absolute value of the determinant. The problem of finding the submatrix of maximum volume is connected to a range of applications in discrete mathematics, engineering, and scientific computing; see, e.g., \cite{ArioliDuff2015,Gritzmann1995,Wang2010}.  Our primary motivation is its connection to low-rank approximation. Specifically, if $A(I,J)$ is invertible then the matrix $A(:,J) A(I,J)^{-1} A(I,:)$ has rank $k$ where the colon is used to denote that all rows or columns are selected. Low-rank approximations that involve columns and rows of the original matrix come in various flavors, as pseudoskeleton approximation~\cite{GoreinovTyrtyshnikovZamarashkin1997}, cross approximation~\cite{Bebendorf2000}, CUR approximation~\cite{DrineasMahoneyMuthukrishnan2008}, or (strong) rank-revealing LU factorizations~\cite{Miranian2003,Pan2000}.
	If $A(I,J)$ has maximum volume then, by a result of Goreinov and Tyrtyshnikov~\cite{GoreinovTyrtyshnikov2001}, we have
	\begin{equation} \label{eq:goreinov}
	 \|A- A(:,J) A(I,J)^{-1} A(I,:)\|_{\max} \le (k+1) \sigma_{k+1}(A),
	\end{equation}
	where $\|\cdot\|_{\max}$ denotes the maximum absolute value of the entries of a matrix. The $(k+1)$th largest singular value of $A$, denoted by $\sigma_{k+1}(A)$, is well known to govern the best rank-$k$ approximation error of $A$ in the spectral norm; see, e.g., \cite[Chapter 7.4]{HornJohnson2013}. In other words, the result~\eqref{eq:goreinov} states that volume maximization yields a quasi-best low-rank approximation.
	
	Finding the submatrix of maximum volume is a difficult problem. In fact, it is NP hard to determine the maximum volume submatrix of $A$~\cite{CivrilMagdon-Ismail2009,Papadimitriou1984}. 
	In~\cite{DiSumma2015} it is shown that there exists a universal constant $c > 1$ such that it is NP hard to approximate the maximum volume of a $k \times k$ submatrix of a matrix $A \in \R^{n \times k}$ within a factor $c^k$.
	By a trivial embedding, this implies that it is also NP hard to approximate the maximum volume of a $k \times k$ submatrix of an $n \times n$ matrix. However, it is important to emphasize that while a good approximation of the maximum volume submatrix yields a good low-rank approximation~\cite[Theorem 2.2]{GoreinovTyrtyshnikov2001}, the converse is generally not true, see also Remark~\ref{remark:polyapprox} below.
	
	Despite the difficulties associated with volume maximization, this concept has proven fruitful in the development of greedy and randomized algorithms that often yield reasonably good low-rank approximations. In particular, \emph{(adaptive) cross approximation}~\cite{Bebendorf2000}, a greedy method for volume maximization,
	can now be regarded as the work horse for matrices $A$ that cannot be stored in memory, because, for example, $A$ has too many nonzero entries or it is too expensive to compute all the entries of $A$. This situation occurs frequently for discretized integral operators and cross approximation plays an important role in accelerating computations within the boundary element method~\cite{Bebendorf2000} and uncertainty quantification~\cite{HarbrechtPetersSchneider2012}.
	Low-rank approximations of the form~\eqref{eq:goreinov} also feature prominently in the Nyström method for kernel-based learning~\cite{BachJordan2005} 
	 and spectral clustering~\cite{Fowlkes2004}. Let us also stress that cross approximation is equivalent to Gaussian elimination and (incomplete) LU factorization with complete pivoting; it primarily constitutes a different point of view with stronger emphasis on low-rank approximation.
	 
	In many of the applications mentioned above, the matrix $A$ carries additional structure. For example, if $A$ is the discretization of an integral operator with a positive semidefinite kernel then $A$ is symmetric positive semidefinite.
	In the first part of this work, we will show that the submatrix of maximum volume is always attained by a principal submatrix if $A$ is symmetric positive semidefinite (SPSD). This has a number of important consequences. For example, it allows us to draw a one-to-one correspondence to the column selection problem considered in \cite{CivrilMagdon-Ismail2009}. In turn, the maximum volume problem remains NP hard when restricted to SPSD matrices. 
	We also extend this result to diagonally dominant (DD) matrices. Somewhat surprisingly, we have not found such results for SPSD and DD matrices in the existing literature.
	
	In the second part of this work, we derive a priori error bounds for the approximation returned by cross approximation. Although the literature on rank-revealing LU factorizations contains related results, see in particular~\cite[Corollary 5.3]{Foster2006}, the non-asymptotic bound of Theorem~\ref{thm:ACAgeneral} appears to be new.
	Our result includes existing work by Harbrecht et al.~\cite{HarbrechtPetersSchneider2012} for SPSD matrices as a special case. For the particular case of doubly DD matrices, we show that the approximation error returned by cross approximation is at most $2(k+1)$ times larger than the right-hand side of~\eqref{eq:goreinov}. This class of matrices includes symmetric DD matrices, which play a prominent role in \cite{Koutis2016,Spielman2014}. Our result also allows us to obtain refined bounds for the convergence of cross approximation applied to functions~\cite{Bebendorf2000, TownsendTrefethen2015}.

	\section{Maximum volume submatrices} \label{sec:maxvol}
	
	In this section, we will prove for two classes of matrices that the submatrix of maximum volume can always be chosen to be a principal submatrix, that is, a submatrix of the form $A(I,I)$.
	
	\subsection{Symmetric positive semidefinite (SPSD) matrices}

	\begin{thm} \label{thm:spsd}
	  Let $A \in \R^{n\times n}$ be SPSD and let $1 \le k \le n$. Then the maximum volume $k\times k$ submatrix of $A$ can be chosen to be a principal submatrix. 
	\end{thm}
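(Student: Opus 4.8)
The plan is to start from an arbitrary maximum volume $k \times k$ submatrix $A(I,J)$ and show that, for an SPSD matrix, one can pass to a principal submatrix without decreasing the volume. The natural handle is the Cholesky-type factorization: write $A = B^\top B$ with $B \in \R^{n \times n}$ (or $B \in \R^{r \times n}$ with $r = \operatorname{rank}(A)$), so that columns of $B$ correspond to indices $\{1,\dots,n\}$ and $A(I,J) = B(:,I)^\top B(:,J)$. By the Cauchy--Binet formula, $\det A(I,J) = \sum_{K} \det B(K,I)\det B(K,J)$ where $K$ ranges over $k$-subsets of the row index set of $B$; hence
\[
  |\det A(I,J)| \le \Big(\sum_K |\det B(K,I)|^2\Big)^{1/2}\Big(\sum_K |\det B(K,J)|^2\Big)^{1/2}
  = \sqrt{\det A(I,I)}\,\sqrt{\det A(J,J)},
\]
using Cauchy--Binet again (this time with $B(:,I)^\top B(:,I)$ and $B(:,J)^\top B(:,J)$) for the two factors. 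So the volume of any $k\times k$ submatrix is bounded by the geometric mean of the volumes of the two principal submatrices sitting on its index sets, and therefore $\max\{\,\mathrm{vol}\,A(I,I),\ \mathrm{vol}\,A(J,J)\,\} \ge \mathrm{vol}\,A(I,J)$. Taking $A(I,J)$ to be of maximum volume, whichever of $A(I,I)$, $A(J,J)$ is larger is a principal submatrix of volume at least as large, hence itself of maximum volume.

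The key steps, in order, are: (1) reduce to the full-rank case, or simply invoke a Gram factorization $A=B^\top B$ valid for any SPSD matrix; (2) express $\det A(I,J)$ via Cauchy--Binet as a sum of products of minors of $B$; (3) apply Cauchy--Schwarz to that sum; (4) recognize each of the two resulting sums of squared minors as $\det A(I,I)$ and $\det A(J,J)$ by a second application of Cauchy--Binet; (5) conclude that the larger principal submatrix dominates, and combine with maximality. A clean alternative to steps (2)--(4) is to interpret volumes geometrically: $\sqrt{\det A(I,I)}$ is the $k$-dimensional volume of the parallelepiped spanned by the columns $\{b_i : i\in I\}$ of $B$, and $|\det A(I,J)|$ is (a Gram determinant, hence) the product of that column system's "width" against the span of $\{b_j : j \in J\}$; the inequality $|\det A(I,J)|^2 \le \det A(I,I)\det A(J,J)$ is then just the statement that projecting onto a subspace does not increase volume. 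Either route is short.

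I expect the only real subtlety to be bookkeeping about rank: if $A$ is singular the maximum volume may be zero for $k > \operatorname{rank}(A)$, in which case every $k\times k$ submatrix (principal or not) has zero determinant and the statement is vacuously true; for $k \le \operatorname{rank}(A)$ one should make sure the chosen principal submatrix is actually nonsingular, which follows because its volume is at least that of the maximum volume submatrix, and the latter is positive. Another minor point is that Cauchy--Binet in the rectangular case requires $B$ to have at least $k$ rows, which is arranged by taking $B \in \R^{n\times n}$. Other than these, the argument is essentially the two-line Cauchy--Schwarz computation above, so I would not anticipate a genuine obstacle; the main work is choosing the cleanest exposition (Gram determinant / projection language versus explicit Cauchy--Binet).
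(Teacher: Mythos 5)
Your proposal is correct, and it follows the same overall skeleton as the paper: factor $A = C^*C$ (Cholesky/Gram), observe $A(I,J) = C(:,I)^* C(:,J)$, and reduce everything to the key inequality $\det(A(I,J))^2 \le \det(A(I,I))\,\det(A(J,J))$, from which the maximum over principal submatrices dominates. Where you genuinely diverge is in how that central inequality is proved. The paper writes $|\det|$ as the product of singular values and invokes the majorization-type inequality for singular values of a product of matrices (Theorem 3.3.4 in Horn and Johnson's \emph{Topics in Matrix Analysis}), so that $\prod_i \sigma_i(C(:,I)^*C(:,J)) \le \prod_i \sigma_i(C(:,I)) \prod_j \sigma_j(C(:,J))$, together with $\sigma_i(A(I,I)) = \sigma_i(C(:,I))^2$. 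You instead expand $\det\bigl(C(:,I)^* C(:,J)\bigr)$ by Cauchy--Binet as $\sum_K \det C(K,I)\det C(K,J)$, apply the scalar Cauchy--Schwarz inequality, and identify the two resulting sums of squared minors as $\det A(I,I)$ and $\det A(J,J)$ by Cauchy--Binet again. Your route is more elementary (no singular-value inequalities for matrix products are needed, only a classical determinant identity and Cauchy--Schwarz), and the geometric/Gram-determinant reading you sketch makes the inequality transparent; the paper's route is shorter given the cited reference and meshes naturally with its later use of singular values in the column-selection discussion, where the volume of $B(:,I)$ is defined as the product of its singular values. Your remarks on the rank-deficient case are sound but not actually needed: the determinant inequality holds with no invertibility assumption, and the maximization argument never divides by a volume.
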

	
	\begin{proof}
	  Let $A(I,J)$ be any $k\times k$ submatrix of $A$. 
	    As $A$ is SPSD, it admits a Cholesky decomposition $A = C^* C$ and, in turn, $A(I,J) = C(:,I)^* C(:,J)$.
The singular values of a principal submatrix satisfy
        \begin{equation*}
            \sigma_i(A(I,I)) = \sigma_i\left (C(:,I)^* C(:,I) \right ) = \sigma_i(C(:,I))^2.
        \end{equation*}
Noting that the absolute value of the determinant equals the product of the singular values, we obtain
        \begin{align*}
	        \det (A(I,J))^2 & = \left ( \Pi_{i=1}^k \sigma_i(A(I,J)) \right )^2 = \left (\Pi_{i = 1}^k \sigma_i (C(:,I)^* C(:,J)) \right )^2 \\
	        & \le \left ( \Pi_{i=1}^k \sigma_i(C(:,I))  \Pi_{j = 1}^k \sigma_j(C(:,J)) \right ) ^2 \\
	        & = \left (  \Pi_{i=1}^k \sigma_i(C(:,I)^* C(:,I))\right) \left ( \Pi_{j=1}^k \sigma_j(C(:,J)^* C(:,J)) \right ) \\ 
	        & = \left ( \Pi_{i = 1}^k \sigma_i(A(I,I))\right)  \left ( \Pi_{j = 1}^k \sigma_j(A(J,J)) \right ) \\
	        & =  \det(A(I,I))\cdot \det(A(J,J)),
        \end{align*}
        where we used~\cite[Theorem 3.3.4]{HornJohnson1991} for the inequality.
	This implies that the volume of $A(I,J)$ is not larger than the maximum of the volumes of $A(I,I)$ and $A(J,J)$. In turn, $A(I,J)$ can be replaced by a principal submatrix without decreasing the volume.
	\end{proof}
	
	Trivially, the result of Theorem~\ref{thm:spsd} extends to symmetric negative semidefinite matrices. On the other hand, it does not extend to the indefinite case; consider for example the $2k\times 2k$ matrix $A = \begin{bmatrix} 0 & I \\ I & 0 \end{bmatrix}$.
	
	\subsubsection{Connection to column selection}
	
	The \emph{volume} of a general $n \times k$ matrix is defined as the product of its singular values.
	The \emph{column selection problem}, which is also connected to low-rank approximation~\cite{Deshpande2006}, is the following: 
	\begin{quote}
	 Given $B \in \R^{n\times m}$ and $1 \le k < m$, select the $n \times k$ submatrix of maximum volume.
	\end{quote}
	 In \cite{CivrilMagdon-Ismail2009} it is shown that this problem is NP hard.
	 
	Theorem \ref{thm:spsd} allows us to relate the column selection problem to the classical maximum volume submatrix problem. 
	Given $B \in \R^{n \times m}$, we consider the SPSD matrix $A = B^* B \in \R^{m \times m}$. As $A(I,I) = B(:,I)^*  B(:,I)$ for any index set $I$, there is a one-to-one correspondence between the principal submatrices of $A$ and the subsets of $k$ columns of $B$.
	Moreover, as seen in the proof of Theorem~\ref{thm:spsd}, the volume of $A(I,I)$ is the square of the volume of $B(:,I)$. This shows that  $B(:,I)$ has maximum volume if and only if $A(I,I)$ has maximum volume. In turn, this proves that the maximum volume submatrix problem remains NP hard when restricted to the subclass of SPSD matrices.

	\subsection{Diagonally dominant (DD) matrices}
	\begin{defn}
	  A matrix $A \in \R^{n\times n}$ is called (row) \emph{diagonally dominant} (DD) if 
	  \begin{equation}\label{eq:DD}
	    \sum_{j = 1, j \neq i}^n |a_{ij}| \le |a_{ii}|, \qquad i = 1, \ldots, n.
	  \end{equation}
	  If~\eqref{eq:DD} holds with strict inequality for $i = 1, \ldots, n$, we call $A$ \emph{strictly DD}.
	  A matrix $A$ is called \emph{doubly DD} if both $A$ and $A^*$ are DD. 
	\end{defn}

	\begin{lemma}\label{lem:WorksForTriang}
	  Let $T \in \R^{n\times n}$ be a strictly DD, upper triangular matrix.  Then $|\det(T(I,J))| < |\det(T(I,I))|$ holds for every 
	  $I,J \subseteq \{1,\ldots,n\}$ with $|I| = |J|$ and $I\not=J$.
	\end{lemma}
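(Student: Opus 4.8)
The plan is to argue by induction on $k := |I| = |J|$, with the matrix $T$ held fixed. I will repeatedly use two elementary facts: since $T$ is upper triangular, $\det(T(I',I')) = \prod_{i\in I'} t_{ii}$ for every index set $I'$; and since $T$ is strictly diagonally dominant, all diagonal entries are nonzero, so $|\det(T(I',I'))| = \prod_{i\in I'}|t_{ii}| > 0$ and the claimed inequality is meaningful. Combining this computation with the induction hypothesis yields the \emph{uniform bound} $|\det(T(I',J'))| \le \prod_{i\in I'}|t_{ii}|$ for all index sets $I',J'$ of common size $k-1$, irrespective of whether $I' = J'$. The base case $k=1$ asserts $|t_{ij}| < |t_{ii}|$ for $i\ne j$, which is immediate from \eqref{eq:DD} with strict inequality (the left-hand side being $0$ when $i>j$).

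For the inductive step, set $\ell := \min I$ and first suppose $\ell\in J$. Since every element of $I$ is $\ge \ell$ and $T$ is upper triangular, the column of $T(I,J)$ indexed by $\ell$ has a single nonzero entry, namely $t_{\ell\ell}$; expanding the determinant along that column gives $|\det(T(I,J))| = |t_{\ell\ell}|\cdot|\det(T(I\setminus\{\ell\},J\setminus\{\ell\}))|$. As $I\ne J$ we have $I\setminus\{\ell\}\ne J\setminus\{\ell\}$, so the induction hypothesis gives $|\det(T(I\setminus\{\ell\},J\setminus\{\ell\}))| < \prod_{i\in I\setminus\{\ell\}}|t_{ii}|$, and multiplying through by $|t_{\ell\ell}|$ yields $|\det(T(I,J))| < \prod_{i\in I}|t_{ii}| = |\det(T(I,I))|$.

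Now suppose $\ell\notin J$, and expand the determinant of $T(I,J)$ along its row indexed by $\ell$. By the triangle inequality and the uniform bound above,
\[
|\det(T(I,J))| \le \sum_{j\in J}|t_{\ell j}|\cdot|\det(T(I\setminus\{\ell\},J\setminus\{j\}))| \le \Big(\sum_{j\in J}|t_{\ell j}|\Big)\prod_{i\in I\setminus\{\ell\}}|t_{ii}|.
\]
Since $\ell\notin J$, only off-diagonal entries of row $\ell$ appear in the sum, so $\sum_{j\in J}|t_{\ell j}| \le \sum_{j\ne\ell}|t_{\ell j}| < |t_{\ell\ell}|$ by strict diagonal dominance; as $\prod_{i\in I\setminus\{\ell\}}|t_{ii}|>0$, it follows that $|\det(T(I,J))| < |t_{\ell\ell}|\prod_{i\in I\setminus\{\ell\}}|t_{ii}| = \prod_{i\in I}|t_{ii}| = |\det(T(I,I))|$, which closes the induction. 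The one subtle point is exactly this last case: the row that is peeled off contains no diagonal entry of $T$, so the strict inequality must be extracted entirely from strict diagonal dominance of that row, while the cofactors corresponding to the various $j\in J$ must all be controlled simultaneously — which is precisely what the uniform bound delivers. A cosmetic simplification would be to factor $T = \operatorname{diag}(t_{11},\dots,t_{nn})\,(I+N)$ first and reduce to the unit-diagonal case, where the claim reads simply $|\det(T(I,J))| < 1$.
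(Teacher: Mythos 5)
Your proof is correct and takes essentially the same route as the paper: induction on $|I|$, Laplace expansion along a row of $T(I,J)$ containing no diagonal entry of $T$, a uniform bound $|\det(T(I',J'))|\le \prod_{i\in I'}|t_{ii}|$ on the cofactors supplied by the induction hypothesis, and strict diagonal dominance of that row providing the strict inequality (the paper also performs the unit-diagonal normalization you mention only as a cosmetic option). The one organizational difference is your case $\ell=\min I\in J$ with the column-peeling reduction, which the paper sidesteps by expanding along a row indexed by an arbitrary $i\in I\setminus J$ (such an $i$ exists since $I\neq J$ and $|I|=|J|$), so that case is unnecessary, though your treatment of it is correct.
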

	
	\begin{proof}
	Let $D$ be the diagonal matrix with $d_{ii} = t_{ii}$ and set $\tilde T = D^{-1} T$. Because of
	\begin{eqnarray*}
	    \det(T(I,J)) &=& \det(D(I,I))\cdot \det(\tilde T(I,J)), \\
	    \det(T(I,I)) &=& \det(D(I,I))\cdot \det(\tilde T(I,I)),
	\end{eqnarray*}
	the statement of the lemma holds for $T$ if and only if it holds for $\tilde T$. In turn, this allows us to assume without loss of generality that $T$ has ones on the diagonal.
	In particular, $\det(T(I,I)) = 1$.
	
	The statement of the lemma will be proven by induction on $k := |I| = |J|$.
	The case $k = 1$ follows immediately from the diagonal dominance of $T$.
	Suppose now that the statement of the lemma is true for fixed $k$. To prove the statement for $k+1$, we consider an arbitrary $(k+1) \times (k+1)$ submatrix $B := T(I,J)$. If $I \not= J$ then there exists a row $B(i,:)$ that does not contain a diagonal element of $T$. By diagonal dominance of $T$, 
	  \begin{equation} \label{eq:sumsmaller1}
	    |b_{i,1}| + |b_{i,2}| + \ldots + |b_{i,k+1}| < 1.
	\end{equation}
	Denote by $B_{ij}$ the $k\times k$ submatrix of $T$ obtained from eliminating the $i$th row and $j$th column of $B$. By induction assumption, $|\det(B_{ij})|\le 1$. Thus, combining~\eqref{eq:sumsmaller1} with the Laplace expansion gives
	\[
	    |\det(B)|  =  \Big| \sum_{j = 1}^{k+1} (-1)^{i+j} b_{ij} \det(B_{ij}) \Big| \le \sum_{j = 1}^{k+1} |b_{ij}|\, |\det(B_{ij})| 
	    \le \sum_{j = 1}^{k+1} |b_{ij}| < 1.
	\]
	In other words, $|\det(T(I,J))| < |\det(T(I,I))|$.
	\end{proof}
	
	\begin{thm}\label{thm:maxvolDD}
	  Let $A \in \R^{n\times n}$ be a diagonally dominant matrix and $1 \le k \le n$.
	  Then the maximum volume $k\times k$ submatrix of $A$ can be chosen to be a principal submatrix.
	\end{thm}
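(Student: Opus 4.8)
The plan is to deduce the statement from Lemma~\ref{lem:WorksForTriang} by passing through an LU factorization, after a reduction to the strictly DD case. First I would handle the non-strict case by perturbation: given a DD matrix $A$, replace each diagonal entry $a_{ii}$ by $a_{ii}+\varepsilon\,\mathrm{sign}(a_{ii})$ (and by $\varepsilon$ when $a_{ii}=0$, which by diagonal dominance forces the whole $i$-th row of $A$ to vanish and is harmless). This produces a strictly DD matrix $A_\varepsilon$ with $A_\varepsilon\to A$ as $\varepsilon\to 0$. Since both $\max_{|I|=|J|=k}|\det A(I,J)|$ and $\max_{|I|=k}|\det A(I,I)|$ depend continuously on $A$, it suffices to treat strictly DD matrices; in fact I would aim for the sharper claim that $|\det A(I,J)|\le|\det A(I,I)|$ for every pair $I,J$ with $|I|=|J|=k$, and then let $\varepsilon\to 0$.

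For a strictly DD matrix $A$, fix index sets $I,J$ with $|I|=|J|=k$. Conjugating $A$ by a permutation matrix that moves the indices of $I$ to the positions $1,\dots,k$ preserves strict row diagonal dominance and merely reorders the rows and columns of $A(I,J)$ and of $A(I,I)$, so I may assume $I=\{1,\dots,k\}$. A strictly DD matrix has nonsingular leading principal submatrices (deleting trailing columns only decreases off-diagonal row sums), hence $A$ admits a factorization $A=LU$ with $L$ unit lower triangular and $U$ upper triangular. Because the first $k$ rows of $L$ are supported on the first $k$ columns, one has $A(\{1,\dots,k\},:)=L(\{1,\dots,k\},\{1,\dots,k\})\,U(\{1,\dots,k\},:)$ with $L(\{1,\dots,k\},\{1,\dots,k\})$ unit lower triangular; restricting to the columns in $J$ and taking determinants gives $\det A(I,J)=\det U(I,J)$ for every $J$, and in particular $\det A(I,I)=\det U(I,I)$.

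It then remains only to verify that $U$ is a strictly DD upper triangular matrix, after which Lemma~\ref{lem:WorksForTriang} applies to $U$ and yields $|\det U(I,J)|<|\det U(I,I)|$ whenever $J\ne I$ (with equality when $J=I$); combined with the two determinant identities this gives $|\det A(I,J)|\le|\det A(I,I)|$, completing the argument. Upper triangularity of $U$ is immediate, and the diagonal dominance of $U$ is the one point requiring a small computation — the step I expect to be the main (if routine) technical obstacle. The idea is that the $j$-th row of $U$ is, up to leading zeros, the first row of the $j$-th Schur complement arising in Gaussian elimination without pivoting, and strict row diagonal dominance is inherited by these Schur complements: after eliminating the first column, the Schur complement $S=(s_{ij})$ of a strictly DD $A$ satisfies $|s_{ii}|-\sum_{l\ne i}|s_{il}|\ge |a_{ii}|-\sum_{l\ne i}|a_{il}|-|a_{i1}|>0$ (the mass subtracted from the diagonal in $S=A_{22}-a_{21}a_{12}^{\!\top}/a_{11}$ being offset by the corresponding drop in $\|a_{12}\|_1$), and the general statement follows by induction on the elimination step.
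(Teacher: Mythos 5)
Your proposal is correct and follows essentially the same route as the paper: reduce to the strictly DD case by a continuity/perturbation argument, factor $A=LU$ after a symmetric permutation bringing $I$ to the leading positions, use the unit triangularity of $L(I,I)$ to pass to $\det U(I,J)$ and $\det U(I,I)$, and conclude with Lemma~\ref{lem:WorksForTriang}. The only difference is that you verify directly (via the Schur-complement computation, which is correct once the row sums are understood to run over the reduced index set) that strict row diagonal dominance is inherited by the elimination, whereas the paper simply cites Theorem 9.9 in \cite{Higham2002} for the existence of an LU factorization with $U$ strictly DD.
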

	
	\begin{proof}
	We prove the theorem in the case when $A$ is strictly DD; the DD case follows by a continuity argument, noting that volumes of submatrices are continuous in $A$. Let $A(I,J)$ be a $k \times k$ submatrix of $A$. Also, by applying a suitable permutation to the rows and columns of $A$, we may assume that $I = \{1,\ldots,k\}$ and $J = \{k-d+1,\ldots,2k-d\}$ with $d = |I\cap J|$. The result of the theorem follows if we can prove 
	  \begin{equation}\label{eq:DetIneqDD}
	    |\det(A(I,J))| \le |\det(A(I,I))|.
	  \end{equation}
	  For this purpose, we note that the LU factorization $A = LU$ always exists with $U$ strictly DD; see Theorem 9.9 in \cite{Higham2002}. We have that
	\[
	    A(I,I) = L(I,I) U(I,I),\quad     A(I,J) = L(I,I) U(I,J).
	  \]
	  As $L(I,I)$ is lower triangular with ones on the diagonal, we obtain
	\[    |\det(A(I,I))| = |\det(U(I,I))|,\quad     |\det(A(I,J)| = |\det(U(I,J))|.
	\]
	Thus, the inequality~\eqref{eq:DetIneqDD} follows from Lemma \ref{lem:WorksForTriang}.
	\end{proof}
	
	For $k = n-1$, the result of Theorem~\ref{thm:maxvolDD}  is covered in the proof of Theorem 2.5.12 in \cite{HornJohnson1991}, while the result of Lemma~\ref{lem:WorksForTriang} for $k = n-1$  follows from Proposition 2.1 in~\cite{Pena2004}.

	\section{Cross Approximation}
	
	In the following, we summarize the idea behind Bebendorf's cross approximation algorithm~\cite{Bebendorf2000}. For this purpose, we first recall 
	that an approximation of the form $A(:,J)A(I,J)^{-1} A(I,:)$ is closely connected to an incomplete LU decomposition of $A$. To see this, suppose that $A$ has been permuted such that $I = J = \{1,\ldots,k\}$ and partition
	\[
	 A = \begin{bmatrix}
	      A_{11} & A_{12} \\
	      A_{21} & A_{22}
	     \end{bmatrix}, \qquad A_{11} \in \R^{k\times k}.
	\]
	Assume that $A_{11}$ is invertible and admits an LU decomposition $A_{11} = L_{11} U_{11}$, where $L_{11}$ is lower triangular and $U_{11}$ is upper triangular with ones on the diagonal. By setting $L_{21} = A_{21} U_{11}^{-1}$ and $U_{12} = L_{11}^{-1} A_{12}$, we obtain
	\begin{eqnarray}
	  A &=& A(:,J)A(I,J)^{-1}A(I,:) + \begin{bmatrix}
	     0 & 0 \\
	     0 & A^{(k)}
	     \end{bmatrix} \nonumber \\  
	     &=& \begin{bmatrix}
	      L_{11}  \\
	      L_{21} 
	     \end{bmatrix}
	     \begin{bmatrix}
	      U_{11} & 
	      U_{12} 
	     \end{bmatrix} + \begin{bmatrix}
	     0 & 0 \\
	     0 & A^{(k)}
	     \end{bmatrix}  \label{eq:sumlu} \\
	     &=& \begin{bmatrix}
	      L_{11} & 0  \\
	      L_{21} & I
	     \end{bmatrix} \begin{bmatrix}
	     I & 0 \\
	     0 & A^{(k)}
	     \end{bmatrix}
	     \begin{bmatrix}
	      U_{11} & 
	      U_{12}  \\
	      0 & I
	     \end{bmatrix}, \label{eq:decomplu}
	\end{eqnarray}
	with the Schur complement \[
	A^{(k)} := A_{22} - A_{21} A_{11}^{-1} A_{12}.                           
	                          \]
	This shows that the approximation error is governed by $A^{(k)}$. The factorized form~\eqref{eq:sumlu} corresponds exactly to what is obtained after applying $k$ steps of the LU factorization to $A$, see, e.g.,~\cite[Chapter 3.2]{GolubVanLoan2013}.
	
	Given index sets $I$ and $J$, one step of the greedy method for volume maximization consists of choosing indices such that
	\begin{equation} \label{eq:localopt}
	 (i_{k+1}, j_{k+1}) = \argmax\big\{\big|\det\!\big(A( I \cup \{i\}, J\cup\{j\} )\big) \big| :\,i\not\in I, j\not\in J\big\}.
	\end{equation}
	Again, let us assume that $I = J = \{1,\ldots,k\}$ and set $\tilde I = I \cup \{k+\tilde i\}$, $\tilde J = J \cup \{k+\tilde j\}$ for some $\tilde i,\tilde j \in \{1,\ldots,n-k\}$. Then~\eqref{eq:decomplu} implies
	\[
	 \det\big(A(\tilde I, \tilde J)\big) = 
	 \det\big(A(I, J)\big) \cdot A^{(k)}(\tilde i,\tilde j).
	\]
	In other words, the local optimization problem~\eqref{eq:localopt} is solved by searching the entry of $A^{(k)}$ that has maximum modulus. This choice leads to Algorithm~\ref{alg:aca}, which is equivalent to applying LU factorization with complete pivoting to $A$.
	\begin{algorithm}
	\caption{Cross approximation with complete pivoting~\cite{Bebendorf2000} \label{alg:aca}}
	 \begin{algorithmic}[1]
	\STATE{Initialize $R_0:=A,\, I:=\{\},\, J:=\{\}.$}
	\FOR{$k = 0,\ldots,m-1$}
	    \STATE{$(i_{k+1},j_{k+1}) := \argmax_{i,j}|R_{k}(i,j)|$} \label{ln:ACAfullpivot}
	    \STATE{$I \gets I\cup\{i_k\},\, J\gets J\cup\{j_k\}$}
	    \STATE{$p_{k+1} := R_{k}(i_{k+1},j_{k+1})$}
	    \STATE{$R_{k+1} := R_{k} - \frac{1}{p_{k+1}} R_{k}(:,j_{k+1}) R_{k}(i_{k+1},:)$} 
	\ENDFOR
	\end{algorithmic}
	\end{algorithm}
	\begin{remark}
	Because of~\eqref{eq:sumlu}, the remainder term of Algorithm~\ref{alg:aca} satisfies $R_{k} = \begin{bmatrix}
	     0 & 0 \\
	     0 & A^{(k)}
	\end{bmatrix}$ after a suitable permutation of the indices. 
	Both for SPSD and DD matrices, the element of maximum modulus is on the diagonal. Positive definiteness and diagonal dominance are preserved by taking Schur complements; see, e.g.,~\cite[Chapter 4]{Zhang2005}. In turn, the search for the pivot element in Step~\ref{ln:ACAfullpivot} can be restricted to the diagonal for such matrices. This  significantly reduces the number of entries of $A$ that need to be evaluated when running Algorithm~\ref{alg:aca}. It also implies that Algorithm~\ref{alg:aca} returns $I = J$, which aligns nicely with the results from Section~\ref{sec:maxvol}.
	Notice that if $A$ is an SPSD matrix then the cross approximation 
	\begin{equation*} 
	 A(:,I)A(I,I)^{-1} A(I,:)
	\end{equation*}
	obtained by Algorithm~\ref{alg:aca} is SPSD. In contrast, diagonal dominance is generally not preserved by the low-rank approximation returned by Algorithm~\ref{alg:aca}.
	\end{remark}

	\subsection{Error analysis for general matrices}
	
	Although not desirable, it may happen that the pivots $p_k$ in Algorithm~\ref{alg:aca} grow. Upper bounds on the \emph{growth factor}
	$
	 \Vert A^{(k)} \Vert_{\max} / \Vert A \Vert_{\max}
	$
	play an important role in the error analysis of Gaussian elimination (see e.g.~\cite{Wilkinson1961}). In the setting of complete pivoting, we can define 
	\begin{equation} \label{eq:growthfactor}
	 \rho_k := 
	 \sup_{A} \big\{ \Vert A^{(k)} \Vert_{\max} / \Vert A \Vert_{\max} \big\},
	\end{equation}
	where the supremum is taken over all matrices of rank at least $k$. This condition ensures that there is no breakdown in the first $k$ steps of Algorithm~\ref{alg:aca}.
	By definition, $1\le \rho_1 \le \rho_2 \le \ldots \le \rho_k$.
	Wilkinson~\cite{Wilkinson1961} proved that
	\begin{equation*}
	    \rho_k \le \sqrt{k+1} \cdot \sqrt{2 \cdot 3^{1/2} \cdot 4^{1/3} \cdot \ldots \cdot (k+1)^{1/k}} \le 2\sqrt{k+1} (k+1)^{\ln (k+1)/4}.
	\end{equation*}
	but it is known that this bound cannot be attained for $k \ge 3$. For matrices occurring in practice, it is rare to see any significant growth and it is not unreasonable to consider $\rho_k = O(1)$; we refer to~\cite[Section 9.4]{Higham2002} for more details. Extending the proof of~\cite[Theorem 3.2]{HarbrechtPetersSchneider2012}, we obtain the following result.
	\begin{thm}\label{thm:ACAgeneral}
	Let $A\in \R^{n\times n}$ have rank at least $m < n$. Then
	the index sets $I,J$ returned by Algorithm~\ref{alg:aca} satisfy
	  \begin{equation}\label{eq:aca-general}
	    \Vert A - A(:,J)A(I,J)^{-1} A(I,:) \Vert_{\max} \le 4^m \cdot \rho_m \cdot \sigma_{m+1}(A).
	  \end{equation}
	\end{thm}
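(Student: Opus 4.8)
The plan is to reduce~\eqref{eq:aca-general} to a lower bound on the smallest singular value of a single $(m+1)\times(m+1)$ submatrix, obtained from the structure of the LU factors produced by complete pivoting. Let $(I,J)$ be the index sets returned by Algorithm~\ref{alg:aca}, let $A^{(m)}$ be the associated Schur complement, and let $p_1,\dots,p_m$ denote the pivots. By the remark following Algorithm~\ref{alg:aca}, the left-hand side of~\eqref{eq:aca-general} equals $\|A^{(m)}\|_{\max}$. If $A$ has rank exactly $m$ then $A^{(m)}=0$ and the claim is trivial, so assume $\sigma_{m+1}(A)>0$; then one further pivot search selects $(i_{m+1},j_{m+1})$ with $|A^{(m)}(i_{m+1},j_{m+1})|=\|A^{(m)}\|_{\max}=:|p_{m+1}|$, and $p_1,\dots,p_{m+1}$ are all nonzero. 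Put $C:=A(\hat I,\hat J)\in\R^{(m+1)\times(m+1)}$ with $\hat I=I\cup\{i_{m+1}\}$, $\hat J=J\cup\{j_{m+1}\}$. Because Schur complementation commutes with restriction to index sets that retain the current pivot, an induction on the step count shows that running Algorithm~\ref{alg:aca} on $C$ reproduces exactly the pivots $p_1,\dots,p_{m+1}$, which are nonzero; hence $C$ is invertible and $C=LU$ with $L$ unit lower triangular, $|L_{ij}|\le1$, and $U$ upper triangular, $U_{ii}=p_i$, $|U_{ij}|\le|U_{ii}|$.

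The core estimate is then a routine triangular-solve bound. From $|(L^{-1})_{ij}|\le 2^{i-j-1}$ for $i>j$ one obtains $\|L^{-1}\|_1,\|L^{-1}\|_\infty\le 2^m$, hence $\|L^{-1}\|_2\le 2^m$; factoring the diagonal out of $U$ as $U=\operatorname{diag}(p_1,\dots,p_{m+1})\widetilde U$ with $\widetilde U$ unit upper triangular and $|\widetilde U_{ij}|\le1$ gives $\|U^{-1}\|_2\le \|\widetilde U^{-1}\|_2/\min_i|p_i|\le 2^m/\min_i|p_i|$. Thus $\|C^{-1}\|_2\le\|U^{-1}\|_2\|L^{-1}\|_2\le 4^m/\min_i|p_i|$, so $\sigma_{m+1}(C)=\sigma_{\min}(C)\ge\min_i|p_i|/4^m$. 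Since the singular values of a submatrix are dominated by those of the parent matrix, $\sigma_{m+1}(A)\ge\sigma_{m+1}(C)$, whence $\min_i|p_i|\le 4^m\,\sigma_{m+1}(A)$.

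It remains to relate $|p_{m+1}|$ to $\min_i|p_i|$. Writing $\min_i|p_i|=|p_\ell|$ and applying the definition~\eqref{eq:growthfactor} of the growth factor to the Schur complement reached after $\ell-1$ steps — whose maximum-modulus entry is $|p_\ell|$ and whose own $(m+1-\ell)$-th Schur complement is $A^{(m)}$ — yields $|p_{m+1}|=\|A^{(m)}\|_{\max}\le\rho_{m+1-\ell}\,|p_\ell|\le\rho_m\min_i|p_i|$, using $\rho_1\le\dots\le\rho_m$. Combining the two displays gives $\|A^{(m)}\|_{\max}=|p_{m+1}|\le 4^m\rho_m\,\sigma_{m+1}(A)$, which is~\eqref{eq:aca-general}. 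For SPSD $A$ the pivots are non-increasing and $\rho_m=1$, so $|p_{m+1}|=\min_i|p_i|$ and the bound specializes to $4^m\sigma_{m+1}(A)$, recovering~\cite{HarbrechtPetersSchneider2012}.

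The one genuinely delicate point is the reduction itself: one must see that the right object to analyze is the compressed matrix $C$ — bounding $\sigma_{m+1}(A)$ directly would incur an $n$-dependent loss — and that complete pivoting on $C$ mimics complete pivoting on $A$, which is exactly what forces $L$ and $U$ to have unit-bounded entries and hence $\|L^{-1}\|_2,\|\widetilde U^{-1}\|_2\le 2^m$. A secondary subtlety, easily overlooked, is that the estimate must be routed through $\min_i|p_i|$ rather than $|p_{m+1}|$: for non-symmetric matrices the pivots need not decrease, and this possible growth is precisely what the factor $\rho_m$ accounts for. Everything else — the triangular inverse bounds and singular value interlacing for submatrices — is standard.
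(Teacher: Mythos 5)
Your proposal is correct and follows essentially the same route as the paper: bound the smallest singular value of the $(m+1)\times(m+1)$ pivot submatrix through the triangular factors whose entries are controlled by complete pivoting, use singular value interlacing to pass to $\sigma_{m+1}(A)$, and relate $|p_{m+1}|$ to the smallest pivot via the growth factor $\rho_m$. The only differences are cosmetic: you place the pivots in $U$ rather than in $L_{11}$ and derive the $2^m$ triangular-inverse bounds by an explicit entrywise induction instead of citing \cite[Theorem 6.1]{Higham1987}, and you additionally spell out the trivial rank-$m$ case.
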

	
	\begin{proof}
	Without loss of generality, we may assume $I = J = \{1,\ldots,m\}$. We perform one more step of Algorithm~\ref{alg:aca} and consider the relation $A_{11} = L_{11} U_{11}$ from~\eqref{eq:sumlu} for $k = m+1$. Because of complete pivoting, the element of largest modulus in the $j$th column of $L_{11}$ is on the diagonal and equals $p_j$.
	For such triangular matrices, Theorem 6.1 in \cite{Higham1987} gives
	\begin{equation*}
	    \Vert L_{11}^{-1} \Vert\le 2^m \cdot \min\{|p_1|,\ldots,|p_{m+1}|\}^{-1},
	\end{equation*}
	where $\Vert\cdot\Vert$ denotes the spectral norm of a matrix.
	Analogously, using that the element of largest modulus in every row of $U_{11}$ is on the diagonal and equals $1$, we obtain $\Vert U_{11}^{-1} \Vert \le 2^m$.
	Hence,
	\[
	 \| A_{11}^{-1} \| = \|U_{11}^{-1} L_{11}^{-1} \|\le 
	 4^m \cdot \min\{|p_1|,\ldots,|p_{m+1}|\}^{-1}.
	\]
	This implies 
	\begin{equation} \label{eq:boundonminp}
	 \min\{|p_1|,\ldots,|p_{m+1}|\} \le 4^m \| A_{11}^{-1} \|^{-1} = 4^m \sigma_{m+1}(A_{11}) \le 4^m \sigma_{m+1}(A),
	\end{equation}
	where we used interlacing properties of singular values, see~\cite[Corollary 7.3.6]{HornJohnson2013}.
	
	On the other hand, as $A^{(m)}$ is the matrix obtained after $j$ steps of Algorithm~\ref{alg:aca} applied to the matrix $A^{(m-j)}$, the definition~\eqref{eq:growthfactor} gives the inequalities
	\[
	 p_{m+1} = \| A^{(m)} \|_{\max} \le \rho_j \cdot \|A^{(m-j)} \|_{\max} = \rho_j \cdot |p_{m-j+1}| \le \rho_m \cdot |p_{m-j+1}|
	\]
	for $j = 1,2,\ldots,m$. We therefore obtain
	\begin{eqnarray}
	 && \Vert A - A(:,J)A(I,J)^{-1} A(I,:) \Vert_{\max} \nonumber \\
	 && =\| A^{(m)} \|_{\max} = |p_{m+1}| \le \rho_m \min\{|p_1|,\ldots,|p_{m+1}|\}.\label{eq:errorIsPivot}
	\end{eqnarray}
	Combined with~\eqref{eq:boundonminp}, this shows the result of the theorem.
	  \end{proof}
	  
	  Because of the factor $4^m$, Theorem~\ref{thm:ACAgeneral} only guarantees good low-rank approximations when the singular values are strongly decaying. This limitation does not correspond to the typical behavior observed in practice; the quantities $\|{L_{11}^{-1}}\|$ and $\|{U_{11}^{-1}}\|$ rarely assume the exponential growth estimates used in the proof of Theorem~\ref{thm:ACAgeneral}. In turn, the factor $4^m$ usually severely overestimates the error. Nevertheless, there are examples for which the error estimate of Theorem~\ref{thm:ACAgeneral} is asymptotically tight; see Section~\ref{sec:acaSPSD} below. 
	  
	The matrix norms on the two sides of the estimate~\eqref{eq:aca-general} do not match. In the following we develop a variant of Theorem~\ref{thm:ACAgeneral} in which the best approximation error is also measured in terms of $\|\cdot\|_{\max}$. This will be useful later on, in Section~\ref{sec:acafunctions}, when considering approximation of functions. Let us define the approximation numbers
	\begin{equation*} 
	 \gamma_k(A):=\min\{\|E\|_{\max}:\, \text{rank}(A+E) \le k\}, \qquad k = 1,\ldots,n.
	\end{equation*}
	Because of $\|E\| / n \le \|E\|_{\max} \le \|E\|$, we have $\sigma_k(A) / n \le \gamma_{k-1}(A) \le \sigma_k(A)$. If $A$ is invertible then $\sigma_n(A) = \|A^{-1}\|^{-1}$. This result, relating the distance to singularity to the norm of the inverse, extends to general subordinate matrix norms; see, e.g.,~\cite[Theorem 6.5]{Higham2002}. In particular, we have
	\begin{equation} \label{eq:distanceinverse}
	 \gamma_{n-1}(A)=\|A^{-1}\|^{-1}_{\infty\to 1},
	\end{equation}
	with $\|\cdot \|_{\infty\to 1}$ denoting the matrix norm induced by the $1$- and $\infty$-norms. More generally, we set \[
	\|B\|_{\alpha \to \beta} :=\sup_{x\not=0} \|Bx\|_\beta / \|x\|_\alpha
	\]
	for vector norms $\|\cdot\|_\alpha$, $\|\cdot\|_\beta$.
	Note that $\|B\|_{1\to \infty}=\|B\|_{\max}$.
	
	\begin{thm}\label{thm:ACAmixednorms}
	 Under the assumptions of Theorem~\ref{thm:ACAgeneral} and with the notation introduced above, we have
	 \[
	    \Vert A - A(:,J)A(I,J)^{-1} A(I,:) \Vert_{\max} \le 2^{2m+1} \cdot \rho_m \cdot \gamma_{m}(A).  
	 \]
	\end{thm}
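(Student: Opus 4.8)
The plan is to reuse the argument in the proof of Theorem~\ref{thm:ACAgeneral} essentially unchanged, and only to replace the estimate~\eqref{eq:boundonminp} — which bounds $\min\{|p_1|,\dots,|p_{m+1}|\}$ through the spectral norm of $A_{11}^{-1}$ — by an analogous bound in the mixed norm $\|\cdot\|_{\infty\to 1}$, so that~\eqref{eq:distanceinverse} applies and produces $\gamma_m$ in place of $\sigma_{m+1}$. As before, one may assume $I=J=\{1,\dots,m\}$; if $\text{rank}(A)=m$ then $A^{(m)}=0$ and $\gamma_m(A)=0$ and the bound is trivial, so otherwise we perform one more step of Algorithm~\ref{alg:aca} and work with the factorization $A_{11}=L_{11}U_{11}$ of the leading $(m+1)\times(m+1)$ block, with $U_{11}$ unit upper triangular and $L_{11}$ lower triangular carrying the pivots $p_1,\dots,p_{m+1}$ on its diagonal.

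First I would split $L_{11}=\widehat L_{11}D$ with $D=\mathrm{diag}(p_1,\dots,p_{m+1})$ and $\widehat L_{11}$ unit lower triangular. Complete pivoting gives $|\widehat L_{11}(i,j)|\le 1$ as well as $|U_{11}(i,j)|\le 1$, and the elementary entrywise bound for inverses of such unit triangular matrices yields $|\widehat L_{11}^{-1}(i,j)|\le 2^{i-j-1}$ for $i>j$ and $|U_{11}^{-1}(i,j)|\le 2^{j-i-1}$ for $j>i$. Summing these over each row gives $\|\widehat L_{11}^{-1}\|_{\infty\to\infty}\le 2^m$, summing over all entries gives $\|U_{11}^{-1}\|_{\infty\to 1}\le\sum_{i,j}|U_{11}^{-1}(i,j)|\le 2^{m+1}-1$, and trivially $\|D^{-1}\|_{\infty\to\infty}=(\min_j|p_j|)^{-1}$. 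From $A_{11}^{-1}=U_{11}^{-1}\widehat L_{11}^{-1}D^{-1}$ and submultiplicativity of mixed norms, routed so that only $U_{11}^{-1}$ occupies the $\infty\to 1$ slot while the other two factors are measured in $\|\cdot\|_{\infty\to\infty}$, I would deduce
\[
 \|A_{11}^{-1}\|_{\infty\to 1}\;\le\;\|U_{11}^{-1}\|_{\infty\to 1}\,\|\widehat L_{11}^{-1}\|_{\infty\to\infty}\,\|D^{-1}\|_{\infty\to\infty}\;<\;2^{m+1}\cdot 2^m\cdot\bigl(\min_j|p_j|\bigr)^{-1}\;=\;2^{2m+1}\bigl(\min_j|p_j|\bigr)^{-1}.
\]

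Since elimination does not break down, $A_{11}$ is invertible, so~\eqref{eq:distanceinverse} applied to the $(m+1)\times(m+1)$ matrix $A_{11}$ gives $\|A_{11}^{-1}\|_{\infty\to 1}^{-1}=\gamma_m(A_{11})$, hence $\min_j|p_j|\le 2^{2m+1}\gamma_m(A_{11})$. Next I would note that restricting any $E$ with $\text{rank}(A+E)\le m$ to the leading $(m+1)\times(m+1)$ block yields a perturbation of $A_{11}$ of rank at most $m$ and with $\|\cdot\|_{\max}$ no larger than $\|E\|_{\max}$, so $\gamma_m(A_{11})\le\gamma_m(A)$. Combining this with $\|A-A(:,J)A(I,J)^{-1} A(I,:)\|_{\max}=\|A^{(m)}\|_{\max}=|p_{m+1}|\le\rho_m\min_j|p_j|$, which is~\eqref{eq:errorIsPivot}, gives the claim.

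The main obstacle will be the norm bookkeeping in the displayed inequality: the triple product $U_{11}^{-1}\widehat L_{11}^{-1}D^{-1}$ must be routed through the right intermediate norms — keeping the $\infty\to 1$ slot for $U_{11}^{-1}$ and using $\|\cdot\|_{\infty\to\infty}$ for $\widehat L_{11}^{-1}$ and $D^{-1}$ — because the crude bound $\|B\|_{\infty\to 1}\le n\|B\|$ would reintroduce a spurious dimension factor and destroy the dimension-free constant $2^{2m+1}$. Everything else is a direct transcription of the proof of Theorem~\ref{thm:ACAgeneral}.
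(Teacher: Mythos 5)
Your proof is correct and follows essentially the same route as the paper: bound $\|A_{11}^{-1}\|_{\infty\to 1}\le 2^{2m+1}\min\{|p_1|,\ldots,|p_{m+1}|\}^{-1}$ via the triangular factors (you merely distribute the bounds differently, putting the $\infty\to 1$ norm on $U_{11}^{-1}$ and handling $L_{11}$ through the column-scaled factor with entrywise bounds, whereas the paper bounds $\|L_{11}^{-1}\|_{\infty\to 1}$ by induction and uses $\|U_{11}^{-1}\|_{1\to 1}\le 2^m$), then apply~\eqref{eq:distanceinverse} to $A_{11}$, the inequality $\gamma_m(A_{11})\le\gamma_m(A)$, and~\eqref{eq:errorIsPivot}. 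One small slip: from $A_{11}=\widehat L_{11}DU_{11}$ the correct ordering is $A_{11}^{-1}=U_{11}^{-1}D^{-1}\widehat L_{11}^{-1}$ rather than $U_{11}^{-1}\widehat L_{11}^{-1}D^{-1}$, but since both $D^{-1}$ and $\widehat L_{11}^{-1}$ are measured in $\|\cdot\|_{\infty\to\infty}$ the stated estimate is unaffected.
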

	\begin{proof}
	Along the lines of the proof of Theorem~\ref{thm:ACAgeneral}, we first note that 
	\[
	 \|L_{11}^{-1}\|_{\infty\to 1} \le (2^{m+1}-1) \min\{|p_1|,\ldots,|p_{m+1}|\}^{-1},
	\]
	which can be shown by induction on $m$.
	Combined with $\|U_{11}^{-1}\|_{1 \to 1} \le 2^m$, see~\cite[Theorem 6.1]{Higham1987}, we obtain
	\begin{eqnarray*}
	 \|A_{11}^{-1}\|_{\max} &=& \|A_{11}^{-1}\|_{\infty \to 1} \le \| U_{11}^{-1}\|_{1 \to 1} \|\|L_{11}^{-1}\|_{\infty\to 1} \\
	 &\le& 2^{2m+1} \min\{|p_1|,\ldots,|p_{m+1}|\}^{-1},
	\end{eqnarray*}
	where we used submultiplicativity~\cite[Eqn (6.7)]{Higham2002}. Using~\eqref{eq:distanceinverse}, this implies
	\[
	  \min\{|p_1|,\ldots,|p_{m+1}|\} \le 2^{2m+1} \| A_{11}^{-1} \|^{-1}_{\infty \to 1} = 2^{2m+1} \gamma_{m}(A_{11}) \le 2^{2m+1} \gamma_{m}(A),
	\]
	with the last inequality being a direct consequence of the definition of $\gamma_{m}$. The rest of the proof is identical with the proof of Theorem~\ref{thm:ACAgeneral}.
	\end{proof}

	\subsection{Error analysis for SPSD matrices}\label{sec:acaSPSD}
	
	In the SPSD case, the pivot elements of Algorithm~\ref{alg:aca} are always non-increasing. Thus, when restricting the supremum in~\eqref{eq:growthfactor} to SPSD matrices of rank at least $k$, one obtains $\rho_k = 1$. In turn, the following result due to Harbrecht et al.~\cite{HarbrechtPetersSchneider2012} is a corollary of Theorem~\ref{thm:ACAgeneral}.
	\begin{cor}\label{cor:ACAposdef}
	  For an SPSD matrix $A$ of rank at least $m$, the bound of Theorem~\ref{thm:ACAgeneral} improves to
	  \begin{equation*}
	    \Vert A - A(:,J)A(I,J)^{-1} A(I,:) \Vert_{\max} \le 4^m \cdot \sigma_{m+1}(A).
	  \end{equation*}
	\end{cor}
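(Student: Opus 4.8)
The plan is to reduce the corollary to a single observation about Algorithm~\ref{alg:aca}: when the supremum in~\eqref{eq:growthfactor} is restricted to SPSD matrices of rank at least $k$, the growth factor equals $1$. Granting this, the corollary is immediate by substituting $\rho_m = 1$ into the bound~\eqref{eq:aca-general} of Theorem~\ref{thm:ACAgeneral}; the degenerate case $\operatorname{rank}(A) = m$ is trivial, since then the remainder after $m$ steps vanishes and so does the left-hand side.

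To establish $\rho_k = 1$ in the SPSD setting I would use two elementary facts. First, for any SPSD matrix $B$ the entry of largest modulus lies on the diagonal, because $|b_{ij}| \le \sqrt{b_{ii}b_{jj}} \le \max\{b_{ii},b_{jj}\}$ for all $i,j$, so that $\|B\|_{\max} = \max_i b_{ii}$. Second, as recalled in the remark following Algorithm~\ref{alg:aca} (see also~\cite[Chapter 4]{Zhang2005}), positive semidefiniteness is preserved under taking Schur complements, so every remainder $A^{(k)}$ generated by Algorithm~\ref{alg:aca} applied to an SPSD matrix is again SPSD. Combined with the first fact, complete pivoting selects a diagonal entry at each step, whence $I = J$ and $p_{k+1} = \|A^{(k)}\|_{\max} = \max_i (A^{(k)})_{ii} \ge 0$.

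The only computation is the monotonicity of the pivots. After the relevant permutation we may place the pivot entry of $A^{(k)}$ in the top-left corner and write $A^{(k)} = \left[\begin{smallmatrix} p_{k+1} & v^{*}\\ v & B\end{smallmatrix}\right]$ with $B$ SPSD; the next remainder is, up to permutation, the Schur complement $A^{(k+1)} = B - p_{k+1}^{-1} v v^{*}$, whose $j$-th diagonal entry is $B_{jj} - p_{k+1}^{-1}|v_j|^2 \le B_{jj}$. Since $A^{(k+1)}$ is again SPSD, the first fact gives $p_{k+2} = \|A^{(k+1)}\|_{\max} = \max_j (A^{(k+1)})_{jj} \le \max_j B_{jj} \le \|A^{(k)}\|_{\max} = p_{k+1}$. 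Iterating yields $\|A^{(m)}\|_{\max} = p_{m+1} \le p_1 = \|A\|_{\max}$, so the restricted growth factor satisfies $\rho_m \le 1$; the reverse inequality is clear from $A^{(0)} = A$. I do not expect a genuine obstacle here: the only point requiring care is the permutation bookkeeping needed to make the identity $p_{k+1} = \|A^{(k)}\|_{\max}$ — which relies on the ``maximum on the diagonal'' fact — compatible with the block partition used to form the Schur complement, after which the corollary drops out of Theorem~\ref{thm:ACAgeneral}.
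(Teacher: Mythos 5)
Your proposal is correct and follows essentially the same route as the paper: the paper's proof consists precisely of the observation that for SPSD matrices the pivots of Algorithm~\ref{alg:aca} are non-increasing, hence the restricted growth factor satisfies $\rho_m = 1$, and then Theorem~\ref{thm:ACAgeneral} is invoked. You merely supply the details (maximum entry on the diagonal, SPSD preserved under Schur complements, diagonal entries non-increasing under the rank-one update) that the paper leaves implicit, and these details are accurate.
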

	
	The bound of Corollary~\ref{cor:ACAposdef} is asymptotically tight, see \cite[Remark 3.3]{HarbrechtPetersSchneider2012} and \cite[p. 791]{Kahan1966}.
	As the growth factor $\rho_m$ which comes into play in Theorem~\ref{thm:ACAgeneral} is small compared to the $4^m$ factor, this also proves that the bound of Theorem~\ref{thm:ACAgeneral} is almost tight. 
	

	\subsection{Error analysis for DD matrices} \label{sec:ddmatrices}
	
	When restricting the supremum in~\eqref{eq:growthfactor} to DD matrices of rank at least $k$, one obtains $\rho_k \le 2$; see Theorem 13.8 in~\cite{Higham2002}.
	\begin{cor} \label{cor:ACAdd}
	  For a DD matrix $A$ of rank at least $m$, the bound of Theorem~\ref{thm:ACAgeneral} improves to
	  \begin{equation*}
	    \Vert A - A(:,J)A(I,J)^{-1} A(I,:) \Vert_{\max} \le (m+1) \cdot 2^{m+1} \cdot \sigma_{m+1}(A).
	  \end{equation*}
	\end{cor}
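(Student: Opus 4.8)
The plan is to rerun the proof of Theorem~\ref{thm:ACAgeneral}, keeping the factor $\rho_m\le 2$ valid for DD matrices (as recalled just above the corollary) and replacing the generic estimate $\|U_{11}^{-1}\|\le 2^m$ by a much sharper one that exploits diagonal dominance. Recall that in that proof one performs one extra elimination step and factors $A_{11}=L_{11}U_{11}$ with $A_{11}\in\R^{(m+1)\times(m+1)}$, $L_{11}$ lower triangular, and $U_{11}$ unit upper triangular; the bound $\|L_{11}^{-1}\|\le 2^m\min\{|p_1|,\dots,|p_{m+1}|\}^{-1}$ coming from complete pivoting is left untouched, with $\|\cdot\|$ the spectral norm throughout.

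The new ingredient is that, for a DD matrix $A$, the triangular factor $U_{11}$ is itself DD. Indeed, the factorization $A=LU$ has $U$ DD (Theorem~9.9 in~\cite{Higham2002}; equivalently, Schur complements of DD matrices are DD~\cite[Chapter~4]{Zhang2005}), a leading principal submatrix of a DD matrix is DD, and rescaling the rows to unit diagonal preserves diagonal dominance. I would then bound the entries of $U_{11}^{-1}$ through the cofactor formula: since $\det(U_{11})=1$, $(U_{11}^{-1})_{ij}=(-1)^{i+j}\det\bigl(U_{11}(I,J)\bigr)$ with $I=\{1,\dots,m+1\}\setminus\{j\}$ and $J=\{1,\dots,m+1\}\setminus\{i\}$. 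For $i=j$ this determinant equals $1$, and for $i\neq j$ Lemma~\ref{lem:WorksForTriang} gives $\bigl|\det\bigl(U_{11}(I,J)\bigr)\bigr|<\bigl|\det\bigl(U_{11}(I,I)\bigr)\bigr|=1$. Hence $\|U_{11}^{-1}\|_{\max}\le 1$, and therefore $\|U_{11}^{-1}\|\le(m+1)\|U_{11}^{-1}\|_{\max}\le m+1$, since $\|B\|\le(m+1)\|B\|_{\max}$ for $(m+1)\times(m+1)$ matrices.

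Combining $\|U_{11}^{-1}\|\le m+1$ with $\|L_{11}^{-1}\|\le 2^m\min\{|p_j|\}^{-1}$ yields $\|A_{11}^{-1}\|\le(m+1)\,2^m\min\{|p_j|\}^{-1}$, so that, exactly as in~\eqref{eq:boundonminp}, $\min\{|p_1|,\dots,|p_{m+1}|\}\le(m+1)\,2^m\,\sigma_{m+1}(A_{11})\le(m+1)\,2^m\,\sigma_{m+1}(A)$. Feeding this and $\rho_m\le 2$ into~\eqref{eq:errorIsPivot} gives $\|A-A(:,J)A(I,J)^{-1}A(I,:)\|_{\max}=|p_{m+1}|\le\rho_m\min\{|p_j|\}\le(m+1)\,2^{m+1}\,\sigma_{m+1}(A)$, as claimed. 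The point needing the most care is that Lemma~\ref{lem:WorksForTriang} is stated for strictly DD matrices, whereas the corollary only assumes $A$ to be DD; the cleanest fix is to first treat the strictly DD case and then pass to the limit, as the pivots, the submatrix volumes, and $\sigma_{m+1}$ depend continuously on $A$ — mirroring the continuity argument used in the proof of Theorem~\ref{thm:maxvolDD}.
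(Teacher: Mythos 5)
Your argument is correct and takes essentially the same route as the paper's proof: keep $\Vert L_{11}^{-1}\Vert \le 2^m \min_j |p_j|^{-1}$ and $\rho_m \le 2$, observe that the unit upper triangular factor $U_{11}$ inherits row diagonal dominance, bound its inverse entrywise by $1$, and conclude $\Vert U_{11}^{-1}\Vert \le m+1$ (the paper via $\Vert \cdot\Vert_F$, you via $\Vert B\Vert \le (m+1)\Vert B\Vert_{\max}$). The only deviation is that the paper simply cites \cite[Prop.~2.1]{Pena2004} for $|(U_{11}^{-1})_{ij}|\le 1$, so no strictness issue arises, whereas you rederive this from Lemma~\ref{lem:WorksForTriang} via cofactors; if you do that, it is cleaner to handle non-strict dominance by perturbing the fixed factor $U_{11}$ (scale its off-diagonal part by $1-\varepsilon$) or by noting the lemma's induction works with weak inequalities, rather than perturbing $A$, since a perturbation of $A$ could change the pivots selected by Algorithm~\ref{alg:aca}.
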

	
	\begin{proof}
	It is well known that the factor $U$ in the LU decomposition of a DD matrix  is again DD; see~\cite{CarlsonMarkham1979}. In particular, this implies that the $(m+1)\times (m+1)$ unit upper triangular matrix $U_{11}$ in the proof of Theorem \ref{thm:ACAgeneral} is DD. Then, for every entry of $U_{11}^{-1}$ we have $ |(U_{11}^{-1})_{ij}|  \le 1$ by \cite[Prop. 2.1]{Pena2004}. Therefore,
	  \begin{equation}\label{eq:invU}
	    \Vert U_{11}^{-1} \Vert \le  \Vert U_{11}^{-1} \Vert_F \le \sqrt{(m+1)(m+2)/2} 
	\le m+1.
	\end{equation}
	This shows that the factor $4^m$ can be reduced to $(m+1)2^m$ in the bound of Theorem \ref{thm:ACAgeneral}. Combined with $\rho_m \le 2$, this establishes the desired result.
	\end{proof}

	\begin{cor}\label{cor:ACAddd}
	  For a doubly DD matrix $A$ of rank at least $m$, the bound of Corollary~\ref{cor:ACAdd} improves to
	  \begin{equation*}
	    \Vert A - A(:,J)A(I,J)^{-1} A(I,:) \Vert_{\max} \le 2 \cdot (m+1)^2 \cdot \sigma_{m+1}(A).
	  \end{equation*}
	\end{cor}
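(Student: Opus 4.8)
The plan is to sharpen the proof of Corollary~\ref{cor:ACAdd} by additionally exploiting the diagonal dominance of $A^{*}$ in order to improve the bound on $\|L_{11}^{-1}\|$. Recall the two ingredients from the proofs of Theorem~\ref{thm:ACAgeneral} and Corollary~\ref{cor:ACAdd}: first, $\|A - A(:,J)A(I,J)^{-1} A(I,:)\|_{\max} = |p_{m+1}| \le \rho_m \min\{|p_1|,\ldots,|p_{m+1}|\}$; and second, if $\|U_{11}^{-1}\| \le C_U$ and $\|L_{11}^{-1}\| \le C_L \min\{|p_1|,\ldots,|p_{m+1}|\}^{-1}$, then $\|A_{11}^{-1}\| \le C_U C_L \min\{|p_j|\}^{-1}$, so that $\min\{|p_j|\} \le C_U C_L\,\sigma_{m+1}(A_{11}) \le C_U C_L\,\sigma_{m+1}(A)$ by singular value interlacing, and hence the error is at most $\rho_m C_U C_L\,\sigma_{m+1}(A)$. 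Since a doubly DD matrix is in particular DD we have $\rho_m \le 2$, and as in Corollary~\ref{cor:ACAdd} the factor $C_U = m+1$ is admissible. The whole task therefore reduces to showing that $C_L = m+1$ works, i.e.\ $\|L_{11}^{-1}\| \le (m+1)\min\{|p_j|\}^{-1}$.

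For this, I would write the (unique) factorization $A_{11} = L_{11}' D\, U_{11}$ with $L_{11}'$ unit lower triangular, $D = \mathrm{diag}(p_1,\ldots,p_{m+1})$, and $U_{11}$ unit upper triangular, so that $L_{11} = L_{11}' D$ and $\|L_{11}^{-1}\| \le \|D^{-1}\|\,\|(L_{11}')^{-1}\| = \min\{|p_j|\}^{-1}\,\|(L_{11}')^{-1}\|$. The crucial claim is that $(L_{11}')^{*}$ is a unit upper triangular DD matrix. To see this, transpose the factorization: $A_{11}^{*} = U_{11}^{*}\bigl(D (L_{11}')^{*}\bigr)$, where $U_{11}^{*}$ is unit lower triangular and $D(L_{11}')^{*}$ is upper triangular with diagonal $D$, so by uniqueness of the LU factorization this is the LU factorization of $A_{11}^{*}$. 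Now $A_{11}^{*}$ is a leading principal submatrix of the DD matrix $A^{*}$, hence DD, and therefore its $U$-factor $D(L_{11}')^{*}$ is DD by~\cite{CarlsonMarkham1979}; scaling each row of an upper triangular DD matrix to have unit diagonal preserves diagonal dominance, so $(L_{11}')^{*}$ is DD. Proposition~2.1 of~\cite{Pena2004} then gives $|((L_{11}')^{-1})_{ij}| = |((L_{11}')^{-*})_{ji}| \le 1$ for all $i,j$, and since $(L_{11}')^{-1}$ is unit lower triangular we obtain, exactly as in~\eqref{eq:invU}, $\|(L_{11}')^{-1}\| \le \|(L_{11}')^{-1}\|_F \le \sqrt{(m+1)(m+2)/2} \le m+1$. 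Hence $\|L_{11}^{-1}\| \le (m+1)\min\{|p_j|\}^{-1}$, i.e.\ $C_L = m+1$.

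Plugging $C_U = C_L = m+1$ and $\rho_m \le 2$ into the chain of the first paragraph yields $\|A - A(:,J)A(I,J)^{-1} A(I,:)\|_{\max} \le 2(m+1)^2\,\sigma_{m+1}(A)$, as claimed. (As in the proof of Theorem~\ref{thm:ACAgeneral} one may assume $I=J=\{1,\ldots,m\}$ after a permutation, which for DD matrices is symmetric and hence preserves double diagonal dominance; the case $\mathrm{rank}(A)=m$ is trivial because then the error vanishes.) The main obstacle — and the only genuinely new point compared with Corollary~\ref{cor:ACAdd} — is the claim that $(L_{11}')^{*}$ is diagonally dominant; the cleanest route to it is the transposed-LU argument above, which converts the diagonal dominance of $A^{*}$ into a statement about the unit lower triangular factor of $A$. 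Everything else is a rerun of the earlier proofs.
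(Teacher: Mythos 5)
Your proof is correct and follows essentially the same route as the paper: the paper also deduces from the diagonal dominance of $A^{*}$ (via the Carlson--Markham result applied to the transposed factorization) that $L_{11}^{*}$ is DD, and then repeats the derivation of~\eqref{eq:invU} to get $\|L_{11}^{-1}\| \le (m+1)\min\{|p_1|,\ldots,|p_{m+1}|\}^{-1}$, combining this with $\|U_{11}^{-1}\|\le m+1$ and $\rho_m\le 2$. Your write-up merely makes explicit the details (the LDU scaling, uniqueness of the transposed LU factorization, and the row-scaling/permutation remarks) that the paper leaves implicit.
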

	
	\begin{proof}
	    Trivially, $A^*$ is DD. By the same arguments as in the proof of Corollary~\ref{cor:ACAdd} this implies that not only $U_{11}$ but also $L_{11}^*$ is DD. Proceeding as in the derivation of~\eqref{eq:invU}, we get
	    \begin{equation*}
	        \| L_{11}^{-1} \| \le \| L_{11}^{-1} \|_F \le (m+1) \cdot \min \{ |p_1|, \ldots, |p_{m+1}| \}^{-1}.
	    \end{equation*}
	    This shows that the factor $(m+1) \cdot 2^{m+1}$ of Corollary~\ref{cor:ACAdd} can be improved to $2(m+1)^2$.
	\end{proof}
	
	\begin{remark} \label{remark:polyapprox}
	The fact that Algorithm~\ref{alg:aca} gives a polynomially good low-rank approximation of a doubly DD matrix does not imply that it also gives a polynomially good approximation of the maximum volume submatrix. For instance, let $n = 2m$ and consider $A = \begin{bmatrix} I_m & 0 \\ 0 & B_m \end{bmatrix}$, where $B_m = \tridiag[\frac{1}{2}, 1, -\frac{1}{2}]$.
	Then Algorithm~\ref{alg:aca} does not perform any pivoting during its $m$ steps and thus the submatrix $I_m$ is selected. Its volume is $1$, while the volume of $B_m$ is exponentially larger, it grows like $\left ( \frac{1+\sqrt{2}}{2} \right ) ^m$.
\end{remark}
	
\subsection{Tightness of estimates for DD matrices}

To study the tightness of the estimates from Section~\ref{sec:ddmatrices}, it is useful to connect Algorithm~\ref{alg:aca} to LDU decompositions. Suppose that the application of $m = n-1$ steps of Algorithm~\ref{alg:aca} yields $I = J = \{1,\ldots,n-1\}$. As in the proof of Theorem~\ref{thm:ACAgeneral}, we exploit the relation~\eqref{eq:sumlu} for $k = m+1 = n$ to obtain the factorization
\[
 A = L_{11} U_{11} = LDU,\quad D:= \text{diag}(p_1, \ldots, p_{n})
\]
where $L:=L_{11} D^{-1}$ and $U := U_{11}$ are lower and upper unit triangular matrices, respectively.
We recall from~\eqref{eq:errorIsPivot} that the error of the approximation returned by Algorithm~\ref{alg:aca} is governed by $|p_{n}|$. 

From now on, let $A$ be a DD matrix. In this case, the pivot growth factor does not exceed $2$ and we have that $|p_{n}| \le 2\|D^{-1}\|^{-1} \le 2|p_{n}|$. In turn, the ratio between $|p_{n}|$ and the best rank-$(n-1)$ approximation error satisfies
\begin{equation} \label{eq:defrm}
 r_m := \frac{|p_n|}{\sigma_{n}(A)} = |p_{n}|\, \| A^{-1} \| \le |p_{n}| \, \| U^{-1} \|\, \| D^{-1} \|\, \| L^{-1} \| \le 2 \| L^{-1} \|\, \|U^{-1}\|.
\end{equation}
Inheriting the diagonal dominance from $A$, the matrix $U$ is well conditioned; see~\eqref{eq:invU}. Therefore, large $r_m$ require $\|L^{-1}\|$ to become large.
	
The quantity $\|L^{-1}\|$ also plays a prominent role in the stability analysis of LDU decompositions, see~\cite{DopicoKoev2011} and the references therein. In particular, the \emph{potential} rapid growth of $\|L^{-1}\|$ under complete pivoting has motivated the search for alternative pivoting strategies~\cite{Pena2004}. However, the existing literature is scarce on examples actually exhibiting such rapid growth. The worst example we could find is by Barreras and Peña~\cite[Sec. 3]{BarrerasPena2012}, which exhibits linear growth. A more rapid growth is attained by the $n \times n$ matrix
\begin{equation*}
	    A = \left [ {
	    \begin{array}{cccc|cccc}
	      1 & -1 & & & & & & \\
	      & 1 & \ddots & & & & & \\
	      & & \ddots & -1 & & & & \\
	      & & & 1 & -\frac{1}{n/2+1} & -\frac{1}{n/2+1} & \cdots & -\frac{1}{n/2+1} \\
	      \hline 
	      -1 & & & & 1 & & & \\
	      -1 & & & & & 1 & & \\
	      \vdots & & & & & & \ddots & \\
	      -1 & & & & & & & 1 \\
	    \end{array}
	    } \right ],
\end{equation*}
where $n$ is even and each block has size $n/2 \times n/2$. When applying complete pivoting to this matrix, no interchanges are performed and the LDU factorization satisfies
\[ \| L^{-1} \| = \Theta(m\sqrt{m}),\quad \| D^{-1} \| = 1/|p_n| = 2, \quad \| U^{-1} \| = \Theta(m).\]
Note that, for this example, the right-hand side of~\eqref{eq:defrm} overestimates the error. This example attains quadratic growth: $r_m = \| A^{-1} \| = \Theta(m^2)$. This is still far away from the exponential growth estimated in Corollary~\ref{cor:ACAdd}, but closer than the example from~\cite[Sec. 3]{BarrerasPena2012}, which yields $r_m = \Theta(m\sqrt{m})$.
	
	For a doubly DD matrix, one obtains linear growth in~\eqref{eq:defrm} by considering the $n \times n$ lower bidiagonal matrix $B$ having $1$ on the diagonal and $-1$ on the first subdiagonal. In this case, $L = B$, $D = U = I_n$ and hence $\| B_n^{-1} \| = \Theta(m)$, showing that $r_m$ can grow at least linearly with $m$. We have not found an example exhibiting the quadratic growth estimated by Corollary~\ref{cor:ACAddd}.

	\subsection{Cross approximation for functions} \label{sec:acafunctions}
	Let us consider the approximation of a function $f:[-1,1]^2\to\R$ by a sum of separable functions:
	\begin{equation*}
	    f(x, y) \approx \sum_{i = 1}^M f_i^{(1)}(x) \cdot f_i^{(2)}(y).
	\end{equation*}
	In the context of \emph{cross approximation}, the factors are restricted to functions $f_i^{(1)}$ of the form $f_i^{(1)} = f(x, \bar y_i)$ and $f_i^{(2)} = f(\bar x_i, y)$, where $\bar x_i$ and $\bar y_i$ are fixed elements of $[-1,1]$. 
	In particular, Micchelli and Pinkus~\cite{MicchelliPinkus1978} considered interpolating approximations of the following form:
	\begin{equation*}
	    f(x,y) \approx \begin{bmatrix} f(x, y_1) \\ \cdots \\ f(x, y_m) \end{bmatrix}^* \cdot  \begin{bmatrix} f(x_1,y_1) & \cdots & f(x_1, y_m)  \\
	\vdots & & \vdots  \\
	f(x_m, y_1) & \cdots & f(x_m, y_m) 
	\end{bmatrix}^{-1}  \cdot \begin{bmatrix} f(x_1, y) \\ \vdots \\ f(x_m, y) \end{bmatrix},
	\end{equation*}
	for some $x_1, \ldots, x_m, y_1, \ldots, y_m \in [-1,1]$. Townsend and Trefethen~\cite{TownsendTrefethen2015} use a strategy for choosing the interpolation points which is basically equivalent to Algorithm~\ref{alg:aca} and they prove a convergence result under some analyticity hypotheses on the function $f$.
	There also exist error analyses for cross approximation of functions when using different pivoting strategies, see, e.g.,~\cite{Bebendorf2011, Schneider2010}.
	
	Algorithm 2 summarizes cross approximation of functions with complete pivoting.
	
	\begin{algorithm}
		\caption{Cross approximation of functions~\cite[Figure 2.1]{Townsend2014} \label{alg:acaf}}
		\begin{algorithmic}[1]
			\REQUIRE{$f:[-1,1]^2\to\R$ and $m > 0$}
			\STATE{$e_0(x,y) = f(x,y)$}
			\STATE{$f_0(x,y) = 0$}
			\STATE{$k = 0$}
			\FOR{$k = 1, \ldots, m$}
			\STATE{$(x_{k+1},y_{k+1}) := \argmax_{(x,y) \in [-1,1]^2} \{ | e_k(x,y) | \}$}
			\STATE{$e_{k+1} := e_k - \frac{e_k(x_{k+1},\cdot) \cdot e_k(\cdot, y_{k + 1})}{e_k(x_{k+1}, y_{k+1})}$}
			\STATE{$f_{k+1} := f_k + \frac{e_k(x_{k+1}, \cdot) \cdot e_k(\cdot, y_{k + 1})}{e_k(x_{k+1}, y_{k+1})}$}
			\ENDFOR
		\end{algorithmic}
	\end{algorithm}

	We now explain the connection to Algorithm~\ref{alg:aca}.
	Fix $(x,y) \in [-1,1]^2$ and consider the points $x_1, \ldots, x_m$ and $ y_1, \ldots, y_m$ obtained by the first $m$ steps of Algorithm~\ref{alg:acaf}. Consider what happens when applying Algorithm~\ref{alg:aca} to the $(m+1)\times(m+1)$ matrix obtained by interpolating $f$ in the points mentioned above:
	\begin{equation*}
	A_{(x,y)} := \begin{bmatrix}
	f(x_1,y_1) & \cdots & f(x_1, y_m) & f(x_1, y) \\
	\vdots & & \vdots & \vdots \\
	f(x_m, y_1) & \cdots & f(x_m, y_m) & f(x_m, y) \\
	f(x, y_1) & \cdots & f(x, y_m) & f(x,y)
	\end{bmatrix}.
	\end{equation*}
	The first chosen pivot will be $p_1 = f(x_1, y_1)$ because it is the largest entry of the matrix. Now observe that the Schur complement $A^{(1)}$ obtained after the first step, is the matrix that interpolates the function $e_1$ in the points $x_2, \ldots, x_m, x$ and $y_2, \ldots, y_m, y$. At this point, the second pivot chosen by Algorithm~\ref{alg:aca} will be $e_1(x_2,y_2)$ because of how Algorithm~\ref{alg:acaf} chose $(x_2,y_2)$ in line $5$. After $m$ steps of Algorithm~\ref{alg:aca} we will be left with only one nonzero entry in position $(m+1,m+1)$ and this will be $e_m(x,y)$.
	This allows us to estimate $|e_m(x,y)|$ via Theorem~\ref{thm:ACAmixednorms}:
	\begin{equation}\label{eq:errorfunction}
	|e_m(x,y)| \le  2^{2m+1} \cdot \rho_m \cdot\gamma_m(A_{(x,y)}).
	\end{equation}
	The last thing we need is an estimate on $\gamma_m(A_{(x,y)})$ that is uniform in $(x,y) \in [-1,1]^2$. This will follow from analyticity assumptions on the functions $f(\cdot, y)$ for $y \in[-1,1]$.
	\begin{defn}
		The Bernstein ellipse $\mathcal{E}_r$ of radius $r > 1$ is the ellipse with foci in $-1$ and $1$ and with sum of the semi-axes equal to $r$.
	\end{defn}
	
	\begin{cor}\label{cor:acaf2}
		Let $f: [-1,1]^2 \to \R$ be such that $f(\cdot, y)$ admits an analytic extension - which we will denote by $\tilde f$ - in the Bernstein ellipse $\mathcal{E}_{r_0}$ of radius $r_0$ for each $y \in [-1, 1]$. Let $1 < r < r_0$ and 
		\begin{equation*}
		M := \sup_{\eta \in \partial \mathcal{E}_r, \, \xi \in [-1,1]} |\tilde f(\eta, \xi)|.
		\end{equation*}
		After $m$ steps of Algorithm~\ref{alg:acaf} the error function satisfies
		\begin{equation*}
		\| e_m \|_{\max} \le \frac{2M\rho_m}{1-1/r} \cdot \left ( \frac{r}{4} \right )^{-m}.
		\end{equation*}
	\end{cor}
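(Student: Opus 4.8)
The idea is to combine the pointwise bound~\eqref{eq:errorfunction}, namely $|e_m(x,y)|\le 2^{2m+1}\rho_m\,\gamma_m(A_{(x,y)})$ for each fixed $(x,y)\in[-1,1]^2$ (which already follows from Theorem~\ref{thm:ACAmixednorms}), with an estimate of $\gamma_m(A_{(x,y)})$ that is uniform in $(x,y)$. Once we show $\gamma_m(A_{(x,y)})\le \tfrac{2M}{1-1/r}\,r^{-m}$ for all $(x,y)$, taking the supremum over $(x,y)$ and collecting the powers of two yields the claimed bound.

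To bound $\gamma_m(A_{(x,y)})$ from above it suffices, by the very definition of $\gamma_m$, to exhibit a single matrix of rank at most $m$ that is $\|\cdot\|_{\max}$-close to $A_{(x,y)}$. I would obtain it by truncating the Chebyshev expansion of $\tilde f$ in its \emph{first} argument: writing $\tilde f(\eta,\xi)=\sum_{k\ge0}a_k(\xi)\,T_k(\eta)$ with $T_k$ the Chebyshev polynomials, set $g(\eta,\xi):=\sum_{k=0}^{m-1}a_k(\xi)\,T_k(\eta)$ and let $R$ be the $(m+1)\times(m+1)$ matrix obtained by sampling $g$ at the same node pairs that define $A_{(x,y)}$ (rows indexed by $x_1,\dots,x_m,x$, columns by $y_1,\dots,y_m,y$). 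Since $g$ is a sum of $m$ separable terms, $\mathrm{rank}(R)\le m$, and since all these nodes lie in $[-1,1]$ we get
\[
 \gamma_m(A_{(x,y)})\le\|A_{(x,y)}-R\|_{\max}\le\sup_{\eta,\xi\in[-1,1]}\bigl|\tilde f(\eta,\xi)-g(\eta,\xi)\bigr|.
\]

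The remaining supremum is handled by a classical Bernstein-ellipse estimate, made uniform in the second variable. For each fixed $\xi\in[-1,1]$ the function $\eta\mapsto\tilde f(\eta,\xi)$ is analytic on $\mathcal{E}_{r_0}\supset\overline{\mathcal{E}_r}$ and, by the maximum modulus principle together with the definition of $M$, satisfies $|\tilde f(\cdot,\xi)|\le M$ on $\overline{\mathcal{E}_r}$; hence its Chebyshev coefficients obey $|a_k(\xi)|\le 2M\,r^{-k}$ for $k\ge1$. With $|T_k|\le1$ on $[-1,1]$ this gives $\sup_{\eta,\xi}\bigl|\sum_{k\ge m}a_k(\xi)T_k(\eta)\bigr|\le 2M\sum_{k\ge m}r^{-k}=\tfrac{2Mr^{-m}}{1-1/r}$, so $\gamma_m(A_{(x,y)})\le\tfrac{2M}{1-1/r}\,r^{-m}$ uniformly in $(x,y)$. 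Substituting into~\eqref{eq:errorfunction}, rewriting $2^{2m+1}\cdot2\cdot r^{-m}$ as a multiple of $(r/4)^{-m}$, and taking the supremum over $(x,y)\in[-1,1]^2$ then produces the asserted estimate, the leading constant being fixed by a careful count of the powers of two (and, if one wants it sharp, by replacing the Chebyshev truncation with the best polynomial approximant of degree $m-1$).

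The one point that genuinely needs care is that a \emph{single} rank-$m$ approximant must work simultaneously for every admissible set of nodes: the points $x_i,y_i$ returned by Algorithm~\ref{alg:acaf} depend on $f$ in an uncontrolled way, and it is precisely the assumption that each $f(\cdot,y)$ extends analytically to a fixed Bernstein ellipse, with a bound uniform over $y\in[-1,1]$, that makes the Chebyshev-truncation error independent of where the nodes fall. Everything else — the growth factor $\rho_m$ and the norms of the inverse triangular factors — is already encapsulated in Theorem~\ref{thm:ACAmixednorms}.
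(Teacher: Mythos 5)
Your proposal is correct and follows essentially the same route as the paper: both apply the pointwise bound~\eqref{eq:errorfunction} and then bound $\gamma_m(A_{(x,y)})$ uniformly by sampling a degree-$(m-1)$ polynomial approximation of $f$ in its first variable at the nodes (a sum of $m$ separable terms, hence a rank-$m$ matrix), the paper citing a Bernstein-ellipse approximation result where you unpack it as an explicit Chebyshev truncation with coefficient decay $|a_k(\xi)|\le 2Mr^{-k}$. The only loose end, the power-of-two bookkeeping you flag, is shared by the paper itself, whose proof as written also yields the constant $4M\rho_m/(1-1/r)$ in front of $(r/4)^{-m}$ rather than the stated $2M\rho_m/(1-1/r)$.
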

	\begin{proof}
	    Fix $(x,y) \in [-1,1]^2$ and let $b: [-1,1] \to \R^{m+1}$ be the vector-valued function defined by
	    \begin{equation*}
	        b(\eta) := \begin{bmatrix} f(\eta, y_1) & \cdots & f(\eta, y_m) & f(\eta, y) \end{bmatrix}^*.
	    \end{equation*}
	    The analyticity hypothesis allows us to apply standard polynomial approximation results (see e.g. Corollary 2.2 in~\cite{KressnerTobler2011}) and conclude that there exists an approximation $\hat b: [-1,1] \to \R^{m+1}$ given by
	    \begin{equation}\label{eq:bhat}
	        \hat b (\eta) = \sum_{k=1}^m p_k(\eta) v_k,
	    \end{equation}
	    where $v_k \in \R^{m+1}$ are constant vectors and $p_k : [-1,1] \to \R$ are polynomials, such that
	    \begin{equation*}
	        \max \| b(\eta) - \hat b(\eta) \|_{\max} \le \frac{2}{1-r^{-1}} \cdot \max_{\alpha \in \mathcal{E}_r} \| b(\alpha) \|_{\max} \cdot r^{-m}
	    \end{equation*}
	    for any $1 < r < r_0$. We can clearly bound $\max_{\alpha \in \mathcal{E}_r} \| b(\alpha) \|_{\max} \le M$.
	
	    The matrix $A_{(x,y)}$ is obtained by sampling $b$ in the points $x_1, \ldots, x_m, x$, i.e.
	    \begin{equation*}
	        A_{(x,y)} = \begin{bmatrix} b(x_1) & \cdots & b(x_m) & b(x) \end{bmatrix}.
	    \end{equation*}
	    Let us define, analogously,
	    \begin{equation*}
	        \hat A_{(x,y)} = \begin{bmatrix} \hat b(x_1) & \cdots & \hat b(x_m) & \hat b(x) \end{bmatrix}.
	    \end{equation*}
	    Notice that $\hat A_{(x,y)}$ has rank as most $m$ because by~\eqref{eq:bhat} each of the $m+1$ columns of $\hat A_{(x,y)}$ is a linear combination of the $m$ vectors $v_1, \ldots, v_m$, so
	    \begin{align*}
	        \gamma_m(A_{(x,y)}) & \le \| A_{(x,y)} - \hat A_{(x,y)} \|_{1\to\infty} = \max_{\alpha \in \{x_1, \ldots, x_m, x\}} \| b(\alpha) - \hat b(\alpha) \|_{\max} \\
	        & \le \frac{2M}{1-r^{-1}} \cdot r^{-m}.
	    \end{align*}
		The result then follows from Equation~\eqref{eq:errorfunction}.
	\end{proof}

		To get convergence of the error function to zero as $m\to\infty$, in Corollary~\ref{cor:acaf2} it is sufficient that the function $f(\cdot, y)$ admits an analytic extension to the Bernstein ellipse $\mathcal{E}_{r_0}$ with $r_0 > 4$ for each $y$, because the factor $\rho_m$ has subexponential growth. Our result compares favorably to Theorem 8.1 in~\cite{TownsendTrefethen2015}, which requires an analytic extension to the region $K$ consisting of all points at a distance $\le 4$ from $[-1,1]$.
		
	Figure \ref{fig:convergenceregions} compares the two domains and it is evident that the requirement from~\cite{TownsendTrefethen2015} is significantly more restrictive.
		
		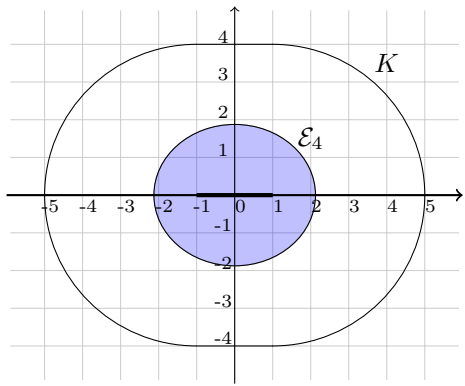
\begin{figure}
			\label{fig:convergenceregions}
			
			\begin{center}
				\begin{tikzpicture}[scale=0.5]

				\draw[very thin, gray!40!white] (-5.9, -4.9) grid (5.9, 4.9);
				\draw[fill = blue, opacity = 0.25] (0,0) ellipse (2.125cm and 1.875cm);
				\draw(0,0) ellipse (2.125cm and 1.875cm);
				\draw[thick] (-1,0) -- (1,0);
				\begin{scope}
				\clip (-6,-5) rectangle (-1,5);
				\draw (-1,0) circle(4);
				\end{scope}
				\begin{scope}
				\clip (6,-5) rectangle (1,5);
				\draw (1,0) circle(4);
				\end{scope}
				\draw (-1,4) -- (1,4);
				\draw (-1,-4) -- (1,-4);
				\draw[ultra thick] (-1,0) -- (1,0);
				
				\draw[->] (0,-5) -- (0,5);
				\draw[->, thick] (-6,0) -- (6,0);
				
				\node at (2,1.5) {$\mathcal{E}_4$};
				\node at (4,3.5) {$K$};
				
				\foreach \x in {-5,-4,-3,-2,-1,0,1,2,3,4,5}
				\node at (\x +0.15, -0.3) {\scriptsize{\x}};
				\foreach \y in {-4,-3,-2,-1,1,2,3,4}
				\node at (-0.3,\y + 0.2) {\scriptsize{\y}};
				\end{tikzpicture}
			\end{center}
			\caption{Analyticity regions ensuring convergence of Algorithm~\ref{alg:acaf} according to Corollary~\ref{cor:acaf2} and~\cite[Theorem 8.1]{TownsendTrefethen2015}.}
		\end{figure}
		
		For a positive semidefinite kernel function $f$, the matrix $A_{(x,y)}$ in~\eqref{eq:errorfunction} is positive semidefinite and hence the bound of Corollary~\ref{cor:acaf2} holds with $\rho_m\equiv 1$. This matches an asymptotic result given in~\cite[Theorem 9.1]{TownsendTrefethen2015}.

	\section{Conclusions}
	
	The fact that the search for the maximum volume submatrix can be restricted to principal submatrices for SPSD and DD matrices appears intuitive and is sometimes used without proof, see, e.g., \cite[Theorem 1]{Nikolov2015}. As far as we know, Theorems~\ref{thm:spsd} and~\ref{thm:maxvolDD} are the first results providing a mathematical justification to this intuition.
	
	For cross approximation, Theorem~\ref{thm:ACAgeneral} appears to be the first non-asymptotic error bound that holds for general matrices. Except for~\cite{HarbrechtPetersSchneider2012},  previous results for cross approximation applied to matrices or functions~\cite{Bebendorf2011,TownsendTrefethen2015} are based on a step-by-step analysis of the error. In contrast, our technique takes a more global view and can, in turn, leverage existing results on the pivot growth in Gaussian elimination. As illustrated in Section~\ref{sec:acafunctions}, this can yield significant advantages.
	
	A number of fundamental questions remain open. Most importantly, there is a mismatch between the derived error bounds and the known worst-case examples for cross approximation applied to DD and doubly DD matrices. Especially for DD matrices, this problem appears to be difficult to overcome and was encountered previously in the context of the error analysis of LDU factorizations~\cite{BarrerasPena2012,DopicoKoev2011}.
	
	\section{Acknowledgments}
	
	We thank Froilan Dopico for an insightful discussion on DD matrices. 
	
	\bibliographystyle{abbrv}
	\bibliography{Bib} 

\begin{thebibliography}{10}

\bibitem{ArioliDuff2015}
M.~Arioli and I.~S. Duff.
\newblock Preconditioning linear least-squares problems by identifying a basis
  matrix.
\newblock {\em SIAM J. Sci. Comput.}, 37(5):S544--S561, 2015.

\bibitem{BachJordan2005}
F.~R. Bach and M.~I. Jordan.
\newblock Predictive low-rank decomposition for kernel methods.
\newblock In {\em Proceedings of the 22nd International Conference on Machine
  Learning}, pages 33--40. ACM, 2005.

\bibitem{BarrerasPena2012}
A.~Barreras and J.~M. Pe\~{n}a.
\newblock Accurate and efficient {LDU} decompositions of diagonally dominant
  {M}-matrices.
\newblock {\em Electron. J. Linear Algebra}, 24:152--167, 2012/13.

\bibitem{Bebendorf2000}
M.~Bebendorf.
\newblock Approximation of boundary element matrices.
\newblock {\em Numer. Math.}, 86(4):565--589, 2000.

\bibitem{Bebendorf2011}
M.~Bebendorf.
\newblock Adaptive cross approximation of multivariate functions.
\newblock {\em Constr. Approx.}, 34(2):149--179, 2011.

\bibitem{CarlsonMarkham1979}
D.~Carlson and T.~L. Markham.
\newblock Schur complements of diagonally dominant matrices.
\newblock {\em Czechoslovak Math. J.}, 29(104)(2):246--251, 1979.

\bibitem{CivrilMagdon-Ismail2009}
A.~\c{C}ivril and M.~Magdon-Ismail.
\newblock On selecting a maximum volume sub-matrix of a matrix and related
  problems.
\newblock {\em Theoret. Comput. Sci.}, 410(47-49):4801--4811, 2009.

\bibitem{Deshpande2006}
A.~Deshpande, L.~Rademacher, S.~Vempala, and G.~Wang.
\newblock Matrix approximation and projective clustering via volume sampling.
\newblock {\em Theory Comput.}, 2:225--247, 2006.

\bibitem{DiSumma2015}
M.~Di~Summa, F.~Eisenbrand, Y.~Faenza, and C.~Moldenhauer.
\newblock On largest volume simplices and sub-determinants.
\newblock In {\em Proceedings of the {T}wenty-{S}ixth {A}nnual {ACM}-{SIAM}
  {S}ymposium on {D}iscrete {A}lgorithms}, pages 315--323. SIAM, Philadelphia,
  PA, 2015.

\bibitem{DopicoKoev2011}
F.~M. Dopico and P.~Koev.
\newblock Perturbation theory for the {LDU} factorization and accurate
  computations for diagonally dominant matrices.
\newblock {\em Numer. Math.}, 119(2):337--371, 2011.

\bibitem{DrineasMahoneyMuthukrishnan2008}
P.~Drineas, M.~W. Mahoney, and S.~Muthukrishnan.
\newblock Relative-error {$CUR$} matrix decompositions.
\newblock {\em SIAM J. Matrix Anal. Appl.}, 30(2):844--881, 2008.

\bibitem{Foster2006}
L.~V. Foster and X.~Liu.
\newblock Comparison of rank revealing algorithms applied to matrices with well
  defined numerical ranks, 2006.
\newblock Manuscript.

\bibitem{Fowlkes2004}
C.~Fowlkes, S.~Belongie, F.~Chung, and J.~Malik.
\newblock {Spectral grouping using the Nyström method}.
\newblock {\em IEEE Transactions on Pattern Analysis and Machine Intelligence},
  26(2):214--225, 2004.

\bibitem{GolubVanLoan2013}
G.~H. Golub and C.~F. Van~Loan.
\newblock {\em Matrix computations}.
\newblock Johns Hopkins Studies in the Mathematical Sciences. Johns Hopkins
  University Press, Baltimore, MD, fourth edition, 2013.

\bibitem{GoreinovTyrtyshnikov2001}
S.~A. Goreinov and E.~E. Tyrtyshnikov.
\newblock The maximal-volume concept in approximation by low-rank matrices.
\newblock In {\em Structured matrices in mathematics, computer science, and
  engineering, {I} ({B}oulder, {CO}, 1999)}, volume 280 of {\em Contemp.
  Math.}, pages 47--51. Amer. Math. Soc., Providence, RI, 2001.

\bibitem{GoreinovTyrtyshnikovZamarashkin1997}
S.~A. Goreinov, E.~E. Tyrtyshnikov, and N.~L. Zamarashkin.
\newblock A theory of pseudoskeleton approximations.
\newblock {\em Linear Algebra Appl.}, 261:1--21, 1997.

\bibitem{Gritzmann1995}
P.~Gritzmann, V.~Klee, and D.~Larman.
\newblock Largest {$j$}-simplices in {$n$}-polytopes.
\newblock {\em Discrete Comput. Geom.}, 13(3-4):477--515, 1995.

\bibitem{HarbrechtPetersSchneider2012}
H.~Harbrecht, M.~Peters, and R.~Schneider.
\newblock On the low-rank approximation by the pivoted {C}holesky
  decomposition.
\newblock {\em Appl. Numer. Math.}, 62(4):428--440, 2012.

\bibitem{Higham1987}
N.~J. Higham.
\newblock A survey of condition number estimation for triangular matrices.
\newblock {\em SIAM Rev.}, 29(4):575--596, 1987.

\bibitem{Higham2002}
N.~J. Higham.
\newblock {\em Accuracy and stability of numerical algorithms}.
\newblock SIAM, second edition, 2002.

\bibitem{HornJohnson1991}
R.~A. Horn and C.~R. Johnson.
\newblock {\em Topics in matrix analysis}.
\newblock Cambridge University Press, Cambridge, 1991.

\bibitem{HornJohnson2013}
R.~A. Horn and C.~R. Johnson.
\newblock {\em Matrix analysis}.
\newblock Cambridge University Press, Cambridge, second edition, 2013.

\bibitem{Kahan1966}
W.~M. Kahan.
\newblock Numerical linear algebra.
\newblock {\em Canadian Mathematical Bulletin}, 9:757--801, 01 1966.

\bibitem{Koutis2016}
I.~Koutis, A.~Levin, and R.~Peng.
\newblock Faster spectral sparsification and numerical algorithms for {SDD}
  matrices.
\newblock {\em ACM Trans. Algorithms}, 12(2):Art. 17, 16, 2016.

\bibitem{KressnerTobler2011}
D.~Kressner and C.~Tobler.
\newblock Low-rank tensor {K}rylov subspace methods for parametrized linear
  systems.
\newblock {\em SIAM J. Matrix Anal. Appl.}, 32(4):1288--1316, 2011.

\bibitem{MicchelliPinkus1978}
C.~A. Micchelli and A.~Pinkus.
\newblock Some problems in the approximation of functions of two variables and
  {$n$}-widths of integral operators.
\newblock {\em J. Approx. Theory}, 24(1):51--77, 1978.

\bibitem{Miranian2003}
L.~Miranian and M.~Gu.
\newblock Strong rank revealing {$LU$} factorizations.
\newblock {\em Linear Algebra Appl.}, 367:1--16, 2003.

\bibitem{Nikolov2015}
A.~Nikolov.
\newblock Randomized rounding for the largest simplex problem [extended
  abstract].
\newblock In {\em S{TOC}'15---{P}roceedings of the 2015 {ACM} {S}ymposium on
  {T}heory of {C}omputing}, pages 861--870. ACM, New York, 2015.

\bibitem{Pan2000}
C.-T. Pan.
\newblock On the existence and computation of rank-revealing {$LU$}
  factorizations.
\newblock {\em Linear Algebra Appl.}, 316(1-3):199--222, 2000.
\newblock Conference Celebrating the 60th Birthday of Robert J. Plemmons
  (Winston-Salem, NC, 1999).

\bibitem{Papadimitriou1984}
C.~H. Papadimitriou.
\newblock The largest subdeterminant of a matrix.
\newblock {\em Bull. Soc. Math. Gr\`ece (N.S.)}, 25:95--105, 1984.

\bibitem{Pena2004}
J.~M. Pe\~{n}a.
\newblock L{DU} decompositions with {L} and {U} well conditioned.
\newblock {\em Electron. Trans. Numer. Anal.}, 18:198--208, 2004.

\bibitem{Schneider2010}
J.~Schneider.
\newblock Error estimates for two-dimensional cross approximation.
\newblock {\em J. Approx. Theory}, 162(9):1685--1700, 2010.

\bibitem{Spielman2014}
D.~A. Spielman and S.-H. Teng.
\newblock Nearly linear time algorithms for preconditioning and solving
  symmetric, diagonally dominant linear systems.
\newblock {\em SIAM J. Matrix Anal. Appl.}, 35(3):835--885, 2014.

\bibitem{Townsend2014}
A.~Townsend.
\newblock {\em Computing with functions in two dimensions}.
\newblock ProQuest LLC, Ann Arbor, MI, 2014.
\newblock Thesis (D.Phil.)--University of Oxford, UK.

\bibitem{TownsendTrefethen2015}
A.~Townsend and L.~N. Trefethen.
\newblock Continuous analogues of matrix factorizations.
\newblock {\em Proc. A.}, 471(2173):20140585, 21, 2015.

\bibitem{Wang2010}
B.~H. Wang, H.~T. Hui, and M.~S. Leong.
\newblock Global and fast receiver antenna selection for {MIMO} systems.
\newblock {\em IEEE Transactions on Communications}, 58(9):2505--2510, 2010.

\bibitem{Wilkinson1961}
J.~H. Wilkinson.
\newblock Error analysis of direct methods of matrix inversion.
\newblock {\em J. Assoc. Comput. Mach.}, 8:281--330, 1961.

\bibitem{Zhang2005}
F.~Zhang, editor.
\newblock {\em The {S}chur complement and its applications}, volume~4 of {\em
  Numerical Methods and Algorithms}.
\newblock Springer-Verlag, New York, 2005.

\end{thebibliography}
	
	\end{document}